\newtheorem{thm}{Theorem}[section]
\newtheorem{prop}[thm]{Proposition}
\newtheorem{cor}[thm]{Corollary}
\newtheorem{lem}[thm]{Lemma}
\newtheorem{rem}[thm]{Remark}
\newcommand{\C}{{\mathcal C}}
\newcommand{\D}{{\mathcal D}}
\newcommand{\Z}{{\mathcal Z}}
\newcommand{\E}{{\mathcal E}}
\newcommand\rep{\operatorname{rep}}
\newcommand\Irr{\operatorname{Irr}}
\newcommand\FPdim{\operatorname{FPdim}}
\newcommand\vect{\operatorname{Vec}}
\newcommand\ad{\operatorname{ad}}
\newcommand\pt{\operatorname{pt}}
\newcommand\Hom{\operatorname{Hom}}
\newcommand\Pic{\operatorname{G}}
\newcommand{\1}{\textbf{1}}
\begin{document}

\title[Fusion categories of small integral dimension]{Frobenius property for fusion categories of small integral dimension}
\author{Jingcheng Dong}
\author{Sonia Natale}
\author{Leandro Vendramin}

\keywords{Fusion category; Frobenius property; Frobenius-Perron dimension}

\subjclass[2010]{18D10; 16T05}

\date{April 22, 2014}

\address{Jingcheng Dong: College of Engineering, Nanjing Agricultural University, Nanjing 210031, P.R.
China} \email{dongjc@njau.edu.cn}
\address{Sonia Natale: Facultad de Matem\'atica, Astronom\'\i a y F\'\i sica.
Universidad Nacional de C\'ordoba. CIEM-CONICET. Ciudad
Universitaria. (5000) C\'ordoba, Argentina}
%\email{natale@famaf.unc.edu.ar; \emph{URL:}\/ http://www.famaf.unc.edu.ar/\~{}natale}
\email{natale@famaf.unc.edu.ar}
\urladdr{http://www.famaf.unc.edu.ar/~natale}

\address{Leandro Vendramin:
Philipps-Universit\" at Marburg,
Fachbereich Mathematik und Informatik,
Hans-Meerwein-Strasse,
D-35032 Marburg,
Germany
}
\email{lvendramin@dm.uba.ar}
\urladdr{http://mate.dm.uba.ar/~lvendram}

\thanks{The research of J. Dong was partially
supported by the Natural Science Foundation of China (11201231), the China
postdoctoral science foundation (2012M511643) and the Jiangsu planned projects
for postdoctoral research funds (1102041C). The work of S. Natale was partially
supported by CONICET and Secyt-UNC. L. Vendramin was supported by CONICET and
the Alexander von Humboldt Foundation}

\begin{abstract}
Let $k$ be an algebraically closed field of characteristic zero.
In this paper we prove that fusion categories of
Frobenius-Perron dimensions $84$ and $90$ are of Frobenius type.
Combining this with previous results in the literature, we obtain
that every weakly integral fusion category of Frobenius-Perron
dimension less than $120$ is of Frobenius type.
\end{abstract}
 \maketitle

%%% ----------------------------------------------------------------------

%%% ----------------------------------------------------------------------
%\tableofcontents

\section{Introduction and main results}\label{sec1}

A fusion category $\C$ is called of \emph{Frobenius type} if for
every simple object $X$ of $\C$, the Frobenius-Perron dimension of
$X$ divides the Frobenius-Perron dimension of $\C$, that is, the
ratio $\FPdim \C/\FPdim X$ is an algebraic integer. A classical result of Frobenius asserts that if $\C$ is the category of finite-dimensional
representations of a finite group, then $\C$ is of Frobenius type.
 Kaplansky conjectured that the representation category
of every finite dimensional semisimple Hopf algebra is of
Frobenius type; this is the sixth one of ten questions posed by
Kaplansky in 1975 \cite[Appendix 2]{Kaplansky}. In general, the
conjecture is still open.

A more general related question was raised in \cite[Question
1]{ENO2}: it was asked in that paper if there exists a fusion category $\C$
which does not have the \emph{strong Frobenius property}, that is,
such that there is a simple object in an indecompo\-sable module
category over $\C$ whose Frobenius-Perron dimension does not
divide the Frobenius-Perron dimension of  $\C$. Recall from
\cite{ENO2} that a fusion category $\C$ is called \emph{weakly
group-theoretical} if it is tensor Morita equivalent to a
nilpotent fusion category. By \cite[Theorem 1.5]{ENO2} every
weakly group-theoretical fusion category does have the strong
Frobenius property. Then every such fusion category is
of Frobenius type.

\medbreak In Theorems \ref{thm2} and \ref{thm3} we study types for integral fusion categories
of Frobenius-Perron dimensions 84 and 90. Combining this with the results
of the paper \cite{ENO2} for fusion categories of Frobenius-Perron
dimensions $p^aq^b$, $pqr$ and $60$, we obtain the following
theorem.

\begin{thm}\label{fpdim100} Let $\C$ be a fusion category of integer
Frobenius-Perron dimension less than $120$. Then $\C$ is of
Frobenius type. Furthermore,  if $\FPdim \C > 1$ and $\C \ncong
\rep \mathbb A_5$, then $\C$ has nontrivial invertible objects.
\end{thm}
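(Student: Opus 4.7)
The plan is a case analysis on the prime factorization of $n = \FPdim \C$. The key arithmetic observation is that every positive integer less than $120$ has at most three distinct prime divisors, since the smallest product of four distinct primes is $2 \cdot 3 \cdot 5 \cdot 7 = 210$. Hence the admissible dimensions split into three families: (i) $n = p^a q^b$ with at most two prime divisors; (ii) squarefree $n = pqr$ with exactly three distinct primes; and (iii) the remaining three-prime-with-repetition values, which a direct enumeration shows are precisely $60$, $84$, and $90$.

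For family (i), the fusion-categorical Burnside theorem \cite[Theorem 1.6]{ENO2} asserts that $\C$ is solvable; since solvable fusion categories are weakly group-theoretical, \cite[Theorem 1.5]{ENO2} gives the strong Frobenius property, and in particular the Frobenius property. For family (ii), the relevant dimensions less than $120$ are $30, 42, 66, 70, 78, 102, 105, 110, 114$, each of which is handled in \cite{ENO2}. In family (iii), the dimension $60$ is again treated in \cite{ENO2}, while $84$ and $90$ are the precise content of Theorems \ref{thm2} and \ref{thm3} of the present paper. These three families exhaust every integer less than $120$, yielding the first assertion.

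For the second assertion on nontrivial invertible objects, I would inspect, case by case, the list of possible \emph{types} produced in each of the above arguments. When $\FPdim \C = p^a q^b > 1$, solvability of $\C$ supplies a nontrivial grading by a cyclic group of prime order, which forces the pointed subcategory $\C_{\pt}$ to have rank greater than one. In the $pqr$ cases and in dimensions $84$ and $90$, the explicit classifications of types furnish a nontrivial invertible in each allowed type. The only exception occurs at dimension $60$: the classification in \cite{ENO2} produces exactly one type with no nontrivial invertibles, namely $\rep \mathbb{A}_5$, reflecting the fact that $\mathbb{A}_5$ is perfect.

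The essential content is imported from the cited theorems, so the proof of Theorem \ref{fpdim100} itself reduces to bookkeeping across the short list of integers below $120$. The genuine obstacle lies inside the body of the paper, in the classification of types for $\FPdim \C = 84$ and $\FPdim \C = 90$ effected by Theorems \ref{thm2} and \ref{thm3}; once those classifications are in hand together with the prior results of \cite{ENO2}, the present statement follows immediately.
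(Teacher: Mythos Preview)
Your case decomposition for the Frobenius-type assertion is the same as the paper's, but you have overlooked one reduction: Theorems~\ref{thm2} and~\ref{thm3} classify only \emph{integral} fusion categories of dimensions $84$ and $90$, whereas Theorem~\ref{fpdim100} concerns weakly integral ones. The paper closes this gap by invoking \cite[Theorem 3.10]{gel-nik}: a weakly integral non-integral $\C$ is a $\mathbb Z_2$-extension of an integral subcategory $\D$ with $\FPdim \D = 42$ or $45$, which is weakly group-theoretical by the $pqr$ or $p^aq^b$ cases already treated, and hence so is $\C$. Without this step your argument for $84$ and $90$ is incomplete.

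The more serious gaps are in your treatment of nontrivial invertibles. For $\FPdim\C = p^aq^b$, solvability does \emph{not} automatically supply a nontrivial grading: by definition it gives either a $\mathbb Z_p$-extension \emph{or} a $\mathbb Z_p$-equivariantization, and in the extension case the existence of a faithful grading does not by itself force $\C_{\pt}$ to be nontrivial. The paper handles this via the induction in Lemma~\ref{pa-qb}: in the equivariantization case $\rep\mathbb Z_p\subset\C$ gives invertibles directly, while in the extension case one applies the inductive hypothesis to the trivial component $\D$. For $\FPdim\C = pqr$ there is no ``explicit classification of types'' in \cite{ENO2} to appeal to; the paper instead argues structurally: either $\C$ is a $\mathbb Z_2$-extension of some $\D$ with $\FPdim\D = qr$ (and $\Pic(\D)\neq 1$ by the previous case), or $\C$ is group-theoretical, say $\C\simeq\C(G,\omega,H,\psi)$, and then either $H=1$ and $\C$ is pointed, or $H\neq 1$ and $\widehat H\hookrightarrow\Pic(\C)$ by \cite[Theorem 5.2]{gel-naidu}. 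Finally, for $\FPdim\C = 60$ the paper does not rely on a type classification but on \cite[Theorem 9.12]{ENO2}, which guarantees a proper fusion subcategory whenever $\C\ncong\rep\mathbb A_5$, reducing to the smaller-dimension cases already established.
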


Theorem \ref{fpdim100} will be proved in Section \ref{sec3}.
Recall that, according to the definition given in \cite{ENO2}, a fusion category
is called \emph{simple} if it has no nontrivial proper fusion
subcategories.  As a consequence of Theorem \ref{fpdim100}, if
$\FPdim \C \leq 119$ and $\FPdim \C \neq 60$ or $p$, where $p$ is
a prime number, then $\C$ is not simple as a fusion
category. Combined with the results of the paper \cite{ENO2}, the
theorem implies that the only weakly integral simple fusion
categories of Frobenius-Perron dimension $\leq 119$ are the
categories $\rep \mathbb A_5$ of finite-dimensional
representations of the alternating group $\mathbb A_5$ and the
pointed fusion categories $\C(\mathbb Z_p, \omega)$ of
finite-dimensional $\mathbb Z_p$-graded vector spaces, where $p$
is a prime number, with associativity constraint determined by a
$3$-cocycle $\omega \in H^3(\mathbb Z_p, k^*)$.

%
%These results may be useful in classifying fusion categories of
%such dimensions. We then

\medbreak The paper is organized as follows. In Section
\ref{prels} we recall some basic definitions and results on fusion
categories. Some useful lemmas are also contained in this section.
In Section \ref{sec2} we consider integral fusion categories and
state several results on the  possible simple Frobenius-Perron
dimensions that will be used later. Most of them have appeared in
the literature in the context of representations of semisimple
Hopf algebras. In Section \ref{sec3} we prove our main results on fusion categories of small dimension; the computations in this section are partly handled by a computer.

\section{Preliminaries}\label{prels}

Throughout this paper we shall work over an algebraically closed base field $k$ of characteristic zero. Recall that a fusion category over $k$ is a $k$-linear semisimple
rigid tensor category $\C$ with finitely many isomorphism classes
of simple objects, finite-dimensional hom spaces, and such that
the unit object $\textbf{1}$ of $\C$ is simple. We refer the
reader to \cite{ENO} for the main notions about fusion categories
used throughout.

\medbreak Let $\C$ be a fusion category over $k$. Let also $\Irr
(\C)$ and $\Pic(\C)$ denote the set of isomorphism classes of
simple and invertible objects of $\C$, respectively. Then $\Irr
(\C)$ is a basis of the Grothendieck ring $K_0(\C)$ of $\C$ and
$\Pic(\C)$ is a subgroup of the group of units of $K_0(\C)$.

 The \emph{Frobenius-Perron dimension} of $x\in \Irr
(\C)$ is the Frobenius-Perron eigenvalue of the matrix of left
multiplication by $x$ in the Grothendieck ring of $\C$. Thus
$\FPdim$ extends to a ring homomorphism $\FPdim : K_0(\C) \to
\mathbb R$. This is the unique ring homomorphism  that takes
positive values in all elements of $\Irr (\C)$. The
Frobenius-Perron dimension of $\C$ is the number $\FPdim \C =
\sum_{x \in \Irr (\C)} (\FPdim x)^2$.

If $X$ is an object of $\C$, $\FPdim X$ is the Frobenius-Perron
dimension of the class of $X$ in $K_0(\C)$. We have $\FPdim X \geq
1$, for all objects $X$ of $\C$. Moreover $\FPdim X = 1$ if and only
if $X$ is an invertible object.

Let $y \in K_0(\C)$ and write $y = \sum_{x \in \Irr
(\C)} m(x, y) x$, where $m(x, y) \in \mathbb Z$. The  integer
$m(x, y)$ is called the \emph{multiplicity} of $x$ in $y$. This
extends to a bilinear form $m: K_0(\C) \times K_0(\C) \to \mathbb
Z$. If $x$ and $y$ represent the class of the objects $X$ and $Y$
of $\C$, respectively, then we have $m(x, y)=\dim \Hom_\C(X, Y)$.

Let $x, y, z \in K_0(\C)$. Then we have
$m(x,y)=m(x^*,y^*)$, and
$$m(x,yz)=m(y^*,zx^*)=m(y,xz^*).$$
Let $x, y \in \Irr(\C)$. Then for each $g \in \Pic(\C)$ we have
$m(g,xy)=1$ if and only if $y=x^*g$ and $0$ otherwise. In
particular, $m(g,xy)=0$ if $\FPdim x\neq \FPdim y$. Let $x\in
\Irr(\C)$. Then for all $g \in \Pic(\C)$, $m(g,xx^{*})>0$ if and
only if $m(g,xx^{*})= 1$ if and only if $gx=x$. The set of
isomorphism classes of such invertible objects will be denoted
$G[x]$. Thus $G[x]$ is a subgroup of $\Pic(\C)$ of order at most
$(\FPdim x)^2$. In particular, for all $x\in \Irr(\C)$, we have a
relation
$$xx^*=\sum_{g\in G[x]}g+\sum_{y\in \Irr(\C),\FPdim y >1} m(y, xx^*) y.$$

In fact, the group $\Pic(\C)$ acts on the set $\Irr(\C)$ by left
multiplication. This action preserves Frobenius-Perron dimensions
and, for $x\in \Irr(\C)$, $G[x]$ is the stabilizer of $x$ in
$\Pic(\C)$.

For every $\alpha\in \mathbb R_+$, we shall use the notation
$\Irr_\alpha(\C)$ to indicate the set of
isomorphism classes of simple objects of $\C$ of Frobenius-Perron
dimension $\alpha$. So that $\Pic(\C) = \Irr_1(\C)$.
Notice that, if $|\Irr_\alpha(\C)|=1$ for some $\alpha
\in \mathbb R_+$, and $x\in \Irr_\alpha(\C)$, then $G[x]=\Pic(\C)$.

\medbreak A fusion subcategory of $\C$ is a full tensor
subcategory $\D$ such that if $X$ is an object of $\C$ isomorphic
to a direct summand of an object $Y$ of $\D$, then $X$ is in $\D$.
If $\D$ is a fusion subcategory of $\C$, then $\D$ is in fact a
fusion category and $\FPdim \D$ divides $\FPdim \C$, that is, the
quotient $\FPdim \C/\FPdim \D$ is an algebraic integer.

Fusion subcategories of $\C$ correspond to \emph{fusion
subrings} of the Grothendieck ring of $\C$, that is, subrings $R$
with the property that for all  $a \in R$ and for all $x \in \Irr
(\C)$ such that $m(x, a) \neq 0$, we have $x \in R$. In other
words, $R$ is a subring which is spanned by a subset of
$\Irr(\C)$. A subset $X$ of $\Irr(\C)$ spans a fusion subring of
$K_0(\C)$ if and only if the product of elements of $X$ decomposes
as a sum of elements of $X$.

The group $\Pic(\C)$ of invertible objects of $\C$
generates a fusion subcategory $\C_{\pt}$ of $\C$, which is the
unique largest pointed fusion subcategory of $\C$. In particular,
the order of $\Pic(\C)$ coincides with $\FPdim \C_{\pt}$ and
therefore it divides $\FPdim \C$.

\medbreak Let $G$ be a finite group. A fusion category $\C$ is called a $G$-extension of a fusion category $\D$ if it admits a faithful grading $\C = \oplus_{g \in G} \C_g$ by the group $G$, satisfying that the tensor product of $\C$ maps $\C_g \times \C_h \to \C_{gh}$ and $(\C_g)^* = \C_{g^{-1}}$, and such that the trivial homogeneous component $\C_e$ is equivalent to $\D$.

Recall from \cite{gel-nik} that a fusion category $\C$ admits a canonical faithful grading $\C = \oplus_{g\in U(\C)}\C_g$, whose trivial component $\C_e$ coincides with the \emph{adjoint fusion subcategory} $\C_{\ad}$. The group  $U(\C)$ is called the \emph{universal grading group} of $\C$. Any faithful grading $\C = \oplus_{g \in G} \C_g$ by a group $G$ comes from a group epimorphism $U(\C) \to G$.

\begin{lem}\label{ker-U} Let $\C$ be a fusion category and let $\Z(\C)$ be its Drinfeld center. Consider the group homomorphism  $F_0: \Pic(\Z(\C)) \to \Pic(\C)$ induced by the  forgetful functor  $F: \Z(\C) \to \C$. Then the following hold:
\begin{itemize}\item[(i)] $\C$ is faithfully graded by the group $\widehat N$, where $N$ is the kernel of $F_0$.

\item[(ii)]  Suppose $U(\C) = 1$. Then the group homomorphism $F_0$ is injective. \end{itemize} \end{lem}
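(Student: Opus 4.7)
The plan is to identify $\ker F_0$ with the Pontryagin dual $\widehat{U(\C)}$ of the universal grading group, from which both assertions follow directly.

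An element of $\ker F_0$ is the isomorphism class of an invertible object $(Z,\gamma)$ in $\Z(\C)$ with $Z \cong \1$ in $\C$. Normalizing $Z=\1$, the half-braiding $\gamma$ becomes a family of isomorphisms $\gamma_X\colon X\to X$ for $X \in \C$, and the hexagon axioms amount to saying that $\gamma$ is a tensor automorphism of $\id_\C$. For simple $X$, $\gamma_X \in \End_\C(X) = k$ is a scalar; the identity $\gamma_X\gamma_{X^*} = \gamma_{X\otimes X^*}$ combined with $\gamma_\1 = 1$ on the $\1$-summand of $X\otimes X^*$ forces $\gamma_X\gamma_{X^*} = 1$, and hence $\gamma$ acts trivially on every simple summand of $X\otimes X^*$. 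Since $\C_{\ad}$ is generated by such summands, $\gamma$ is trivial on $\C_{\ad}$ and descends to a character $\phi_\gamma\colon U(\C)\to k^*$, yielding a natural homomorphism $\ker F_0 \to \widehat{U(\C)}$.

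The substantive step is that this map is an isomorphism. Injectivity is immediate, since a tensor automorphism of $\id_\C$ acting as the identity on every simple must itself be the identity. Surjectivity---that every character of $U(\C)$ lifts to a half-braiding on $\1$---is the nontrivial input and is extracted from \cite{gel-nik}; this is the main obstacle of the argument. Granting it, $\ker F_0$ is a finite abelian group of order $|U(\C)|$, non-canonically isomorphic to $U(\C)$, and transporting the canonical grading $\C = \bigoplus_{g\in U(\C)}\C_g$ along any such isomorphism produces a faithful grading of $\C$ by $\ker F_0$, proving (i). Part (ii) is then immediate: if $U(\C) = 1$ then $\widehat{U(\C)} = 1$, so $\ker F_0 = 1$, i.e.\ $F_0$ is injective.
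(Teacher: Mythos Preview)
Your identification $\ker F_0 \cong \operatorname{Aut}_\otimes(\id_\C) \cong \widehat{U(\C)}$ is correct, and it makes part~(ii) immediate. However, your proof of (i) has a genuine gap: from the isomorphism $\ker F_0 \cong \widehat{U(\C)}$ you conclude that $\ker F_0$ has order $|U(\C)|$ and is non-canonically isomorphic to $U(\C)$. This presupposes that $U(\C)$ is abelian, which is false in general (take $\C = \operatorname{Vec}_G$ for $G$ non-abelian; then $U(\C) = G$ while $\widehat{U(\C)} \cong G^{\mathrm{ab}}$). The fix is easy: $\widehat{U(\C)}$ is finite abelian of order $|U(\C)^{\mathrm{ab}}|$, and the canonical surjection $U(\C) \to U(\C)^{\mathrm{ab}}$ furnishes a faithful grading of $\C$ by $U(\C)^{\mathrm{ab}}$; transporting along any isomorphism $U(\C)^{\mathrm{ab}} \cong \widehat{U(\C)} \cong \ker F_0$ gives the desired faithful grading by $\ker F_0$.

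By contrast, the paper's argument avoids $U(\C)$ altogether for part~(i): it observes that the pointed subcategory of $\Z(\C)$ generated by $\ker F_0$ is symmetric with trivial twist, hence Tannakian by Deligne, i.e.\ equivalent to $\rep G$ for a finite abelian $G$ with $\widehat G = \ker F_0$; then \cite[Proposition~2.9(i)]{ENO2} supplies a faithful $G$-grading of $\C$, and $G \cong \ker F_0$ since both are finite abelian of the same order. Part~(ii) is then deduced from (i). Your route is more explicit and computes $\ker F_0$ exactly, which is informative, but the paper's route is shorter and sidesteps the abelian-vs-nonabelian issue entirely.
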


\begin{proof} (i)  Observe that  $F$ induces by restriction a tensor functor $F: \langle N \rangle \to \vect$, where $\langle N \rangle$ is the (pointed) fusion subcategory of $\Z(\C)$ generated by $N$. Therefore $\langle N \rangle$ is equivalent as a fusion category to the category of representations of the commutative Hopf algebra $k^N$. Thus part (i) follows from \cite[Proposition 5.10]{ENO}.

(ii) The assumption implies that $\C$ admits no faithful group grading.   Therefore (ii) follows from (i) (observe that the group of invertible objects of $\Z(\C)$ and thus also its subgroup $N$ are abelian, whence $\widehat N \cong N$).
\end{proof}

\medbreak The fusion category $\C$ is called \emph{ weakly
integral} if $\FPdim \C$ is a natural number. If $\FPdim X \in
\mathbb Z$, for all object $X$ of $\C$, $\C$ is called
\emph{integral}. Suppose $\C$ is a weakly integral fusion
category. It follows from \cite[Theorem 3.10]{gel-nik} that either
$\C$ is integral, or $\C$ is a $\mathbb Z_2$-extension of a fusion
subcategory $\D$. In particular, if $\FPdim \C$ is odd or if $\C =
\C_{\ad}$, then $\C$ is necessarily integral.

\begin{lem}\label{lem1}
Let $x\in \Irr(\C)$. Then the following hold:

(i)\,The order of $G[x]$ divides $(\FPdim x)^2$.

(ii)\,The order of $\Pic(\C)$ divides $n(\FPdim x)^2$, where $n$
is the number of non-isomor\-phic simple objects of
Frobenius-Perron dimension $\FPdim x$.
\end{lem}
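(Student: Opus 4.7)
For part (i), the plan is to combine the canonical decomposition
\[
xx^* = \sum_{g \in G[x]} g \;+\; \sum_{y \in \Irr(\C),\ \FPdim y > 1} m(y, xx^*)\, y
\]
recalled in the preliminaries with the observation that left tensor multiplication by $G[x]$ permutes the simple summands of $xx^*$, preserving both multiplicities and Frobenius--Perron dimensions; this is because $g \otimes (xx^*) \cong (g \otimes x) \otimes x^* \cong xx^*$ for every $g \in G[x]$. Grouping the non-invertible summands of $xx^*$ into $G[x]$-orbits, each orbit $O$ has cardinality $|G[x]|/|G[x] \cap G[y_O]|$ by orbit--stabilizer, and every member of $O$ shares a common multiplicity $m_O = m(y_O, xx^*)$ and a common Frobenius--Perron dimension $d_O = \FPdim y_O$. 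Taking Frobenius--Perron dimensions and collecting terms one obtains
\[
(\FPdim x)^2 \;=\; |G[x]| \;+\; |G[x]| \sum_{O} \frac{m_O\, d_O}{|G[x] \cap G[y_O]|}.
\]

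The main obstacle is to verify that the right-hand sum is an algebraic integer, which is exactly the assertion $|G[x]| \mid (\FPdim x)^2$. I would address this by introducing the pointed fusion subcategory $\mathcal{B} = \langle G[x]\rangle \subseteq \C$, of Frobenius--Perron dimension $|G[x]|$, and viewing $xx^*$ as a left $\mathcal{B}$-module object under tensor product; since the invertible summands of $xx^*$ reproduce exactly the regular object $R_{\mathcal{B}} = \bigoplus_{g \in G[x]} g$ of $\mathcal{B}$, the desired divisibility drops out of a freeness result analogous to the Nichols--Zoeller theorem (the statement that $\C$ is free as a $\mathcal{B}$-module, applied to $xx^*$), which gives $\FPdim \mathcal{B} = |G[x]|$ as a divisor of $\FPdim(xx^*) = (\FPdim x)^2$.

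Part (ii) then follows from (i) by a clean orbit-counting argument. The group $\Pic(\C)$ acts on the finite set $\Irr_{\FPdim x}(\C)$ by left tensor multiplication and preserves Frobenius--Perron dimensions. Letting $y_1,\ldots, y_k$ denote representatives of the distinct $\Pic(\C)$-orbits in $\Irr_{\FPdim x}(\C)$, the orbit--stabilizer theorem gives $n = \sum_{i=1}^{k} |\Pic(\C)|/|G[y_i]|$. Multiplying through by $(\FPdim x)^2 = (\FPdim y_i)^2$ and invoking (i) applied to each $y_i$, so that each quotient $(\FPdim x)^2/|G[y_i]|$ is a positive integer, one obtains
\[
n\, (\FPdim x)^2 \;=\; |\Pic(\C)| \, \sum_{i=1}^{k} \frac{(\FPdim x)^2}{|G[y_i]|},
\]
exhibiting $|\Pic(\C)|$ as a divisor of $n\, (\FPdim x)^2$.
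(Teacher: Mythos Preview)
Your derivation of (ii) from (i) via orbit--stabilizer is clean and correct, and it is exactly what the paper has in mind when it writes ``It is enough to show (i).''

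For (i), however, there is a genuine gap. The orbit decomposition of $xx^*$ under the $G[x]$-action is set up correctly, but --- as you yourself note --- it does not by itself yield the divisibility: the summand $m_O d_O/|G[x]\cap G[y_O]|$ is not obviously an algebraic integer, and you cannot appeal to (i) for $y_O$ here without circularity (and even then (i) only gives $|G[y_O]|\mid d_O^2$, not $|G[x]\cap G[y_O]|\mid d_O$). Your fallback, invoking a ``freeness result analogous to Nichols--Zoeller \dots\ applied to $xx^*$,'' does not work as stated. The object $xx^*$ is \emph{not} free over $R_{\mathcal B}=\bigoplus_{g\in G[x]}g$: if it were, every simple summand would occur with multiplicity divisible by the order of its $G[x]$-stabilizer, but each $g\in G[x]$ occurs in $xx^*$ with multiplicity exactly $1$ while its stabilizer is all of $G[x]$. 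So neither ``$xx^*$ is a free $\mathcal B$-object'' nor ``$\C$ is free over $\mathcal B$, hence so is $xx^*$'' is a valid step, and the observation that the invertible part of $xx^*$ equals $R_{\mathcal B}$ is true but not what drives the divisibility.

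The paper's argument stays with your subcategory $\mathcal B=\langle G[x]\rangle$ but shifts attention from the object $xx^*$ to the rank-one $\mathcal B$-module subcategory $\mathcal M\subseteq\C$ spanned by $X$ (i.e.\ the full abelian subcategory of direct sums of copies of $X$, which is $\mathcal B$-stable precisely because $g\otimes X\cong X$ for all $g\in G[x]$). Since $\mathcal M$ has a unique simple object it is indecomposable, and $\FPdim\mathcal M=(\FPdim x)^2$. The divisibility $|G[x]|=\FPdim\mathcal B\mid\FPdim\mathcal M$ then follows from the module-category divisibility theorem of Etingof--Nikshych--Ostrik \cite[Proposition~8.15 and Remark~8.17]{ENO}. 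This is the ``Nichols--Zoeller type'' input you were reaching for; the point is that it lives at the level of indecomposable module categories, not of individual objects like $xx^*$.
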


\begin{proof} It is enough to show (i). Consider the fusion subcategory $\D \subseteq \C_{\pt}$ generated by $G[x]$. Regard $\C$ as module category over $\D$
with respect to the action given by the tensor product of $\C$. If
$X$ is a simple object of $\C$ representing $x$, then the
definition of $G[x]$ implies that $g \otimes X \cong X$, for all
simple object $g$ of $\D$. Therefore, the full abelian subcategory
$\mathcal M$ whose objects are isomorphic to direct sums of copies
of $X$ is an indecomposable $\D$-module subcategory of $\C$. Since
$X$ is the unique simple object of $\mathcal M$ up to isomorphism,
then $\FPdim \mathcal M = (\FPdim x)^2$. Part (i) now follows from
\cite[Proposition 8.15 and Remark 8.17]{ENO}.
\end{proof}

\begin{cor}\label{pdivn} Suppose that $\Pic(\C)$ is of prime order $p$.
Assume in addition that $p > \vert \Irr_\alpha(\C)\vert > 0$, for
some $\alpha \neq 1$. Then $G[x] = \Pic(\C)$, for all $x \in
\Irr_\alpha(\C)$. In particular, $p$ divides $\alpha^2$.
\end{cor}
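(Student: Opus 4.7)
The plan is to exploit the action of $\Pic(\C)$ on $\Irr_\alpha(\C)$ by left tensor multiplication, noted in the preliminaries, and combine it with the Lagrange-type restriction that $\Pic(\C)$ has prime order $p$.

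First I would recall from the text that tensoring with an invertible object preserves Frobenius-Perron dimension, so the left-multiplication action of $\Pic(\C)$ on $\Irr(\C)$ restricts to an action on $\Irr_\alpha(\C)$, and that for $x \in \Irr_\alpha(\C)$ the stabilizer of $x$ under this action is exactly $G[x]$. Since $\Pic(\C)$ has prime order $p$, any subgroup is either trivial or the whole group, so $G[x]$ is either $1$ or $\Pic(\C)$.

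Next I would apply the orbit-stabilizer theorem: the $\Pic(\C)$-orbit of $x$ has cardinality $p/|G[x]|$, which is either $1$ or $p$. This orbit is contained in $\Irr_\alpha(\C)$, so if $G[x] = 1$ the orbit would have size $p$, forcing $|\Irr_\alpha(\C)| \geq p$ and contradicting the hypothesis $p > |\Irr_\alpha(\C)|$. Therefore $G[x] = \Pic(\C)$ for every $x \in \Irr_\alpha(\C)$, as claimed.

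Finally, for the divisibility assertion, I would simply invoke Lemma \ref{lem1}(i), which says that $|G[x]|$ divides $(\FPdim x)^2 = \alpha^2$; since $|G[x]| = |\Pic(\C)| = p$, this gives $p \mid \alpha^2$. There is no real obstacle here: the only subtle point is checking that the action of $\Pic(\C)$ indeed preserves $\Irr_\alpha(\C)$ and that its stabilizers coincide with the subgroups $G[x]$, both of which are already recorded in the preceding discussion.
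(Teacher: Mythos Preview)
Your proposal is correct and follows essentially the same approach as the paper: exploit the $\Pic(\C)$-action on $\Irr_\alpha(\C)$, use that orbits have size $1$ or $p$ to force $G[x]\neq\1$ (hence $G[x]=\Pic(\C)$), and then invoke Lemma~\ref{lem1}(i) for the divisibility. The paper's version is just more terse, saying ``decomposing $\Irr_\alpha(\C)$ into disjoint orbits'' where you spell out orbit--stabilizer explicitly.
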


\begin{proof} Let $x \in \Irr_\alpha(\C)$. Decomposing the set $\Irr_\alpha(\C)$ into disjoint
orbits under the action of $\Pic(\C)$, we get that $G[x] \neq \1$.
Then necessarily $G[x] = \Pic(\C)$. It follows from Lemma
\ref{lem1} that $p$ divides $\alpha^2$.
\end{proof}

\begin{rem} In the notation of the proof of Lemma \ref{lem1}, the rank-one module category $\mathcal M$ corresponds to a fiber functor over $\D$. Being a pointed fusion category, this implies that  $\D$ is equivalent as a fusion category to the category $\C({G[x]}) = \C({G[x]}, 1)$ of finite-dimensional $G[x]$-graded vector spaces.

The proof of Corollary \ref{pdivn} shows moreover that, under the
assumptions of the lemma, we have an equivalence of fusion
categories $\C_{\pt} \cong \C(\mathbb Z_p)$.
\end{rem}

\medbreak Let $1=d_0, d_1,\cdots, d_s$, $s \geq 0$, be positive
real numbers such that $1 = d_0 < d_1< \cdots < d_s$, and let
$n_1,n_2,\cdots,n_s$ be positive integers. We shall
say that $\C$ is \emph{of type} $(d_0,n_0;
d_1,n_1;\cdots;d_s,n_s)$ if, for all $i = 0, \cdots, s$, $n_i$ is
the number of the non-isomorphic simple objects of
Frobenius-Perron dimension $d_i$.

Hence, if $\C$ is of type $(d_0,n_0; d_1,n_1;\cdots;d_s,n_s)$,
then $n_0$ equals the order of $\Pic(\C)$ and we have a relation
\begin{equation}\label{gral} \FPdim \C = n_0 + d_1^2n_1 + \dots +
d_s^2n_s. \end{equation}

For each $i = 1, \dots, s$, let $x^{l}_i \in \Irr(\C)$, $1 \leq l
\leq n_i$, such that $\FPdim x^l_i = d_i$. Decomposing the tensor
product $x^l_i\otimes {x^l_i}^*$ into a direct sum of simple
objects, and comparing dimensions,  we get a relation $$d_i^2 =
\vert G[x^l_i] \vert + \sum_{j = 1}^s m_jd_j,$$ where $m_j = \sum_t
m(x^t_j, x^l_i\otimes {x^l_i}^*)$, for all $j$. Hence $m_j \geq
0$, and in addition $m_j = \sum_t m(x^t_j, x^l_i\otimes {x^l_i}^*)
= \sum_t m(x^l_i, x^t_j\otimes x^l_i) \leq \sum_t \FPdim x^t_j =
n_jd_j$.

\begin{lem}\label{prod-simples-categorico}
Let $x, x' \in \Irr(\C)$. Then the following are equivalent:
\begin{enumerate} \item[(i)]$x^*x' \in \Irr(\C)$.

\item[(ii)] For all $\1 \neq y \in \Irr(\C)$, either $m(y, xx^*)
= 0$ or $m(y, x'{x'}^*) = 0$. \end{enumerate}
\end{lem}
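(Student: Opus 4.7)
The plan is to reduce the equivalence to the single Schur-orthogonality computation of $m(x^{*}x', x^{*}x')$ expressed in two different ways, and conclude from the non-negativity of the multiplicities appearing.

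More precisely, first I would use the identities recalled in the preliminaries (specifically $m(x, yz) = m(y, xz^{*}) = m(y^{*}, zx^{*})$ together with the self-adjointness of the unit) to establish the key identity
\begin{equation*}
m(x^{*}x', x^{*}x') \;=\; m(xx^{*}, x'{x'}^{*}).
\end{equation*}
One way is to apply $m(a, b) = m(\1, ab^{*})$ (which follows from $m(a,b)=\dim\Hom_\C(A,B)=\dim\Hom_\C(\1, AB^{*})$) to the left-hand side, obtaining $m(\1, x^{*}x'{x'}^{*}x)$, and then rearrange cyclically using the identities above to reach $m(xx^{*}, x'{x'}^{*})$.

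Next, expanding $xx^{*}$ and $x'{x'}^{*}$ in the basis $\Irr(\C)$ of $K_{0}(\C)$ and using that this basis is orthonormal with respect to $m$ (i.e.\ $m(y, y') = \delta_{y, y'}$ for $y, y' \in \Irr(\C)$), I would write
\begin{equation*}
m(xx^{*}, x'{x'}^{*}) \;=\; \sum_{y \in \Irr(\C)} m(y, xx^{*})\, m(y, x'{x'}^{*}).
\end{equation*}
Since $m(\1, xx^{*}) = m(x, x) = 1$ and similarly for $x'$, isolating the contribution of $y = \1$ yields
\begin{equation*}
m(x^{*}x', x^{*}x') \;=\; 1 + \sum_{\1 \neq y \in \Irr(\C)} m(y, xx^{*})\, m(y, x'{x'}^{*}).
\end{equation*}

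Finally, $x^{*}x' \in \Irr(\C)$ is equivalent to $m(x^{*}x', x^{*}x') = 1$, which by the displayed formula is equivalent to the vanishing of the remaining sum. All its terms are non-negative integers, hence the sum vanishes if and only if every term vanishes, i.e.\ for each $\1 \neq y \in \Irr(\C)$ either $m(y, xx^{*}) = 0$ or $m(y, x'{x'}^{*}) = 0$, which is condition (ii). No step presents any real obstacle; the only care needed is in setting up the correct sequence of dualities to get the symmetric identity in the first paragraph.
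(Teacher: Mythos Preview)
Your argument is correct. The paper does not give its own proof of this lemma at all; it simply cites \cite[Lemma 6.1]{Bichon}, so there is no in-paper argument to compare against. The computation you outline---reducing both conditions to $m(x^{*}x',x^{*}x')=1+\sum_{\1\neq y}m(y,xx^{*})\,m(y,x'{x'}^{*})$ via the adjunction identities and orthonormality of $\Irr(\C)$---is the standard proof and is presumably what appears in the cited reference. The only point deserving a word of justification is the cyclic step $m(\1,x^{*}x'{x'}^{*}x)=m(\1,xx^{*}x'{x'}^{*})$, which is not literally among the identities listed in Section~\ref{prels} but follows from them together with $m(a,b)=m(a^{*},b^{*})$; you already flag this as the one place requiring care.
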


\begin{proof} See \cite[Lemma 6.1]{Bichon}. \end{proof}

We state the following lemma for future use. It is a consequence of the solvability of fusion categories of Frobenius-Perron dimension $p^aq^b$ \cite[Theorem 1.6]{ENO2} .

\begin{lem}\label{pa-qb} Suppose $\FPdim \C = p^aq^b$, where $p$ and $q$ are prime numbers, $a, b \geq 0$,
such that $a+b \neq 0$. Then the group $\Pic(\C)$ is not trivial.
\end{lem}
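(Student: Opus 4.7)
The plan is to combine the Burnside-type theorem of ENO2 with the structure theorem for solvable fusion categories. By \cite[Theorem 1.6]{ENO2}, the hypothesis $\FPdim \C = p^a q^b$ forces $\C$ to be solvable in the sense of that paper. So the real content is: a nontrivial solvable fusion category has a nontrivial invertible object. I would deduce this from the structural description of solvable fusion categories (cf.\ \cite[Proposition 4.5]{ENO2}), which guarantees that any solvable fusion category $\C$ with $\FPdim \C > 1$ contains a nontrivial fusion subcategory $\D$ that is either pointed or Tannakian, that is, $\D \cong \rep G$ for some nontrivial finite group $G$.

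If the nontrivial subcategory $\D$ produced above is pointed, then already $\Pic(\C) \supseteq \Irr(\D) \neq \{\1\}$ and we are done. Otherwise $\D \cong \rep G$ with $1 \neq G$ finite, and $\FPdim \D = |G|$ divides $\FPdim \C = p^a q^b$. Hence $|G| = p^i q^j$ with $i + j \geq 1$, so by the classical Burnside $p^a q^b$-theorem for groups, $G$ is solvable, and thus has nontrivial abelianization $G / [G, G]$. Any nontrivial character of $G/[G,G]$ lifts to a nontrivial one-dimensional representation of $G$, giving a nontrivial invertible object in $\rep G \subseteq \C$. In either case $\Pic(\C)$ is nontrivial.

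The potential obstacle is that the second alternative above relies on the classical theorem of Burnside for solvable groups rather than anything categorical, but this is harmless. Aside from that, the only thing to check carefully is the quoted structure theorem from \cite{ENO2}; an induction on $\FPdim \C$ based on the definition of solvability (nontrivial $\mathbb{Z}_p$-extension or $\mathbb{Z}_p$-equivariantization) provides an alternative: in the equivariantization case $\C$ contains $\rep \mathbb{Z}_p$ as a Tannakian subcategory, hence contains $p-1$ nontrivial invertible objects; in the extension case the trivial component $\C_e$ is a proper fusion subcategory of smaller Frobenius–Perron dimension $p^{a-1}q^b$ or $p^a q^{b-1}$, still solvable, and the induction hypothesis supplies a nontrivial invertible object of $\C_e \subseteq \C$. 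The base case is trivial, as every fusion category of prime dimension is pointed.
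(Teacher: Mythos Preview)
Your proposal is correct, and the inductive ``alternative'' you outline at the end is exactly the paper's own proof: invoke solvability via \cite[Theorem 1.6]{ENO2}, then split into the equivariantization case (which supplies $\rep\mathbb Z_p\subseteq\C$) and the extension case (where induction applies to the trivial component).

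One remark on your first approach: the citation is slightly off. \cite[Proposition 4.5]{ENO2} does not merely produce a nontrivial pointed or Tannakian subcategory; part (ii) of that proposition already asserts directly that a nontrivial solvable fusion category contains a nontrivial invertible object. So once solvability is established, you could simply quote that and be done---the detour through the classical Burnside $p^aq^b$-theorem for groups, while correct, is unnecessary. The paper (and your alternative) essentially reproduces the inductive proof of that fact in the special case at hand.
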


\begin{proof} The proof is by induction on $\FPdim \C$.
The assumption implies that $\C$ is solvable \cite[Theorem
1.6]{ENO2}. Then $\C$ is either a $G$-equivariantization or a
$G$-extension of a fusion category $\D$,  where $G$ is a cyclic
group of prime order. If $\C$ is a $G$-equivariantization, then
$\C$ contains a fusion subcategory equivalent to $\rep G$, hence
the lemma follows in this case. Otherwise, $\C$ has a faithful
grading $\C = \oplus_{g \in G}\C_g$, where $\C_e = \D$. Since
$\FPdim \D$ divides $\FPdim \C$, then either $\FPdim \D = 1$ and
$\C$ is pointed, or $1 < \FPdim \D = p^{a'}q^{b'}$, with $|G|
\FPdim \D = \FPdim \C$. In the first case we are done. In the last
case, $\D$  has nontrivial invertible objects, by induction. Hence
so does $\C$. This finishes the proof of the lemma. \end{proof}

\section{Integral fusion categories}\label{sec2}

Along this section $\C$ will denote an integral fusion category
over $k$. We collect here some useful facts about the structure of
$\C$ that will be useful in the proof of our main results. Some of
these results have been established in the literature in the case
where $\C$ is the category of finite-dimensional representations
of a semisimple Hopf algebra, but their proofs only make use of
the properties of the Grothendieck ring explained in Section
\ref{prels}. Therefore these proofs also work  \textit{mutatis
mutandis} in the fusion category setting, and thus they are
omitted in our exposition.

\medbreak In what follows we assume that $\C$ is of type $(1, n_0;
d_1, n_1; \cdots; d_s, n_s)$.

\begin{lem}\label{lem3} Let $d = \operatorname{gcd}(d_1, \cdots, d_s)$. Then
$d$ divides $n_0$.
\end{lem}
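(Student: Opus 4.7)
The plan is to exploit the decomposition of $xx^*$ displayed in Section \ref{prels} applied to any non-invertible simple $x$, taking Frobenius-Perron dimensions modulo $d$.

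Assuming $s \geq 1$ (otherwise the statement is vacuous), I would pick any $x \in \Irr_{d_1}(\C)$, so that $\FPdim x = d_1$, and use the formula
\[
xx^* = \sum_{g \in G[x]} g + \sum_{\substack{y \in \Irr(\C)\\ \FPdim y > 1}} m(y, xx^*)\, y.
\]
Applying the ring homomorphism $\FPdim$ to this identity gives
\[
d_1^2 = |G[x]| + \sum_{j=1}^{s} m_j d_j,
\]
where $m_j = \sum_{t} m(x^t_j, xx^*) \in \mathbb{Z}_{\geq 0}$ as in the computation already carried out in the excerpt.

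Now I observe that, by definition of $d$, every $d_j$ is divisible by $d$; in particular so is $d_1$, and therefore $d$ divides $d_1^2$ and each term $m_j d_j$ of the right-hand sum. Consequently $d$ divides $|G[x]|$. Since $G[x]$ is a subgroup of $\Pic(\C)$, Lagrange's theorem yields $|G[x]| \mid n_0$, whence $d \mid n_0$, as desired.

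The argument is essentially a one-line reduction modulo $d$ of the identity governing $xx^*$, so no serious obstacle appears; the only small point to check is that the set $\Irr_{d_1}(\C)$ is nonempty, which is granted by the assumption $s \geq 1$ built into the notion of type.
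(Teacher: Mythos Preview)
Your proof is correct and follows essentially the same approach as the paper: decompose $xx^*$ for a non-invertible simple $x$, take Frobenius--Perron dimensions to see that $d$ divides $|G[x]|$, and then conclude via $|G[x]| \mid |\Pic(\C)| = n_0$. The paper's proof is simply a terser version of what you wrote.
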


In particular, if $n_0$ is a prime number and $d \neq 1$, then $d
= p$. On the other hand, if $\C$ is of type $(1,n_0; d,n)$, then
$d$ divides $n_0$ (see \cite[Lemma 5.2]{dong2}, \cite[Theorem 5.1 (b)]{LR}), hence in this
case $\C$ is of Frobenius type.

\begin{proof}
Let $x \in \Irr(\C)$ such that $\FPdim x > 1$. Decomposing the
product $xx^*$ and taking Frobenius-Perron dimensions, we get that
$d$ divides the order of $G[x]$. This implies the lemma.
%Let $x_i(1\leq i\leq n)$ be all distinct irreducible characters of
%degree $t$, $s$ the order of group $G[x_1]$, and $u$ the number of
%irreducible characters of degree $t$ in the decomposition of
%$x_1x_1^*$. Then, we have $t^2=s+ut$ from $x_1x_1^*=\sum_{g\in
%G[x_1]}g+\sum_im(x_i,x_1x_1^*)x_i$. It follows that $t$ divides
%$s$, which implies $t$ divides $m$.
\end{proof}

\begin{lem}\label{prod-leq2} Let $\C$ be an integral fusion category. Suppose that one of the following conditions hold:

(a)\, $\Irr_4(\C) = \emptyset$ and $G[x] \neq \1$, for all $x \in \Irr_2(\C)$, or

(b) \, $G[x] \cap G[x'] \neq \1$, for all $x, x' \in \Irr_2(\C)$.

Then $\C$ has a fusion subcategory of type $(1, n_0; 2, n)$, where $n = \vert
\Irr_2(\C)\vert$.
\end{lem}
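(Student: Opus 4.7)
The plan is to show that $X = \Pic(\C) \cup \Irr_2(\C)$ spans a fusion subring of $K_0(\C)$. Products of an invertible with an element of $X$ are plainly in $X$, so the problem reduces to checking that for every pair $x, x' \in \Irr_2(\C)$ the decomposition of $xx'$ contains only invertible and dimension-two simple summands. Since $\FPdim(xx') = 4$, the only decompositions containing a summand of dimension at least $3$ are $xx' = g + y$ with $g \in \Pic(\C)$ and $y \in \Irr_3(\C)$, or $xx' = z$ with $z \in \Irr_4(\C)$, and I must rule both possibilities out.

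The first case, $xx' = g + y$, I would exclude uniformly in both hypotheses. Since $m(g, xx') = 1$, the formula recalled in Section~\ref{prels} forces $x' = x^* g$; multiplying $xx' = g + y$ on the right by $g^{-1}$ yields $xx^* = 1 + yg^{-1}$, where $yg^{-1} \in \Irr_3(\C)$. Hence the invertible part of $xx^*$ is $\{1\}$, that is, $G[x] = \1$, contradicting hypothesis (a) directly and hypothesis (b) via the instance $x' = x$.

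The main obstacle is excluding $xx' = z$ with $z \in \Irr_4(\C)$ under hypothesis (b); in case (a) this is immediate from $\Irr_4(\C) = \emptyset$. For case (b) I plan to apply Lemma~\ref{prod-simples-categorico} to the pair $(x^*, x')$, both of which lie in $\Irr_2(\C)$. The lemma characterises the irreducibility of $(x^*)^* \cdot x' = x^{**} x'$ by the condition that for every $\1 \neq y \in \Irr(\C)$, either $m(y, x^* x^{**}) = 0$ or $m(y, x' x'^*) = 0$. Restricted to invertible classes this amounts to $G[x^*] \cap G[x'] = \1$, which fails by hypothesis (b) applied to the pair $(x^*, x')$. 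Therefore $x^{**} x' = xx'$ is reducible, and in particular is not a single simple of dimension~$4$. The implicit identification $x^{**} \cong x$ poses no difficulty since an integral fusion category over $k$ is pseudo-unitary and hence admits a spherical structure. Combined with the previous paragraph, this covers every remaining possibility for the decomposition of $xx'$, and proves the lemma.
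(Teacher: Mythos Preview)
Your proof is correct and follows essentially the same strategy as the paper: show that $\Pic(\C)\cup\Irr_2(\C)$ spans a fusion subring by controlling the decomposition of $xx'$ for $x,x'\in\Irr_2(\C)$. For case (b) you invoke Lemma~\ref{prod-simples-categorico} exactly as the paper does (the paper points to this lemma and defers the details to \cite[Theorem 2.4.2]{Natale4}, whereas you spell them out). For case (a) there is a minor cosmetic difference: the paper argues that the invertible constituents of $xx'$ form a coset of $G[x]$, hence occur in even number, so the residual part lands in $\Irr_2(\C)$; you instead rule out the two exceptional decompositions $g+y_3$ and $z_4$ directly. Both arguments are equally short and rest on the same ingredients.
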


\begin{proof} Assume first that (a) holds. Let $x, y \in \Irr_2(\C)$. Note that the invertible objects appearing
in $xy$ with positive multiplicity form a coset of the stabilizer $G[x]$ in
$\Pic(\C)$, and therefore there is an even number of them. The remaining simple constituents (if any) of $xy$
should therefore belong to $\Irr_2(\C)$.

In view of Lemma \ref{prod-simples-categorico},
assumption (b) also implies that the product of any two elements $x, y
\in \Irr(\C)$ such that $\FPdim x, \FPdim y \leq 2$ decomposes a
sum of elements of $\Irr(\C)$ of Frobenius-Perron dimension $\leq
2$. See \cite[Theorem 2.4.2]{Natale4}.

Then we have shown that both assumptions (a) and (b) imply that the set $\{ x \in
\Irr(\C)\vert \, \FPdim x \leq 2 \}$ spans a fusion subring of
$K_0(\C)$, corresponding to a fusion subcategory of the prescribed
type.
\end{proof}

\begin{lem}\label{lema-8} Suppose that $\Irr_2(\C)$ is odd and the order of $\Pic(\C)$ is divisible by $4$.
Then $\C$ has a non-pointed fusion subcategory of Frobenius-Perron
dimension $8$. \end{lem}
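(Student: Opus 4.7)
The plan is to produce a self-dual simple object $x\in\Irr_2(\C)$ with $|G[x]|=4$ and then verify that $G[x]\cup\{x\}$ spans a fusion subring of $K_0(\C)$ of Frobenius-Perron dimension $8$. To this end, I would first analyse the action of $\Pic(\C)$ on $\Irr_2(\C)$ by left multiplication. By Lemma \ref{lem1}(i) the stabilizer $G[x]$ of any $x\in\Irr_2(\C)$ has order dividing $(\FPdim x)^2=4$, so every orbit has size $|\Pic(\C)|/|G[x]|\in\{|\Pic(\C)|,|\Pic(\C)|/2,|\Pic(\C)|/4\}$. Writing $|\Pic(\C)|=2^a t$ with $t$ odd and $a\geq 2$, the only orbits of odd size are those with $|G[x]|=4$ and $a=2$. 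Since $|\Irr_2(\C)|$ is the sum of these sizes and is odd, one must have $a=2$ (so the $2$-part of $|\Pic(\C)|$ equals exactly $4$) and the number $N$ of orbits with $|G[x]|=4$ must be odd. Hence the set $X=\{x\in\Irr_2(\C):|G[x]|=4\}$ has cardinality $N\cdot t$, which is odd.

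Next I would show that $X$ is stable under duality. Given $x\in X$, set $H=G[x]$, so that $xx^*=\sum_{h\in H}h$ is entirely invertible. Decompose $x^*x=\sum_{g\in G[x^*]}g+\sum_y m(y,x^*x)\,y$ with $\FPdim y\geq 2$ in the second sum. The associativity identity $(xx^*)x=x(x^*x)$ then reads $4x=|G[x^*]|\,x+\sum_y m(y,x^*x)\,xy$. Comparing the coefficient of $x$ (using the reciprocity formula $m(x,xy)=m(y,x^*x)$) and the total Frobenius-Perron dimensions on both sides yields
\begin{equation*}
\sum_y m(y,x^*x)\bigl(\FPdim y-m(y,x^*x)\bigr)=0.
\end{equation*}
Since $m(y,x^*x)\leq 4/\FPdim y\leq 2$, every summand is non-negative, and a nonzero summand would force $\FPdim y=m(y,x^*x)=2$; the dimension bookkeeping would then give $|G[x^*]|\leq 0$, contradicting $1\in G[x^*]$. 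Hence the second sum vanishes, $x^*x$ is entirely invertible, $|G[x^*]|=4$, and so $x^*\in X$. Because $*$ is an involution on the odd-sized set $X$, it must fix some element.

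Fix such a self-dual element $x\in X$ and set $H=G[x]$, of order $4$. Self-duality of $x$ implies that $H$ also coincides with the right stabilizer $\{g\in\Pic(\C):xg=x\}$: if $h\in H$, applying $*$ to $hx=x$ gives $x^*h^{-1}=x^*$, hence $xh^{-1}=x$, so $h^{-1}\in\{g:xg=x\}$, which is a subgroup. Thus $hx=xh=x$ for every $h\in H$, and combined with $xx^*=\sum_{h\in H}h$ this shows that $H\cup\{x\}\subseteq\Irr(\C)$ is closed under tensor products and duals. It therefore spans a fusion subring of $K_0(\C)$ of Frobenius-Perron dimension $|H|+(\FPdim x)^2=8$ and type $(1,4;\,2,1)$. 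The correspondence between fusion subrings and fusion subcategories recalled in Section \ref{prels} then delivers the desired non-pointed fusion subcategory of $\C$ of Frobenius-Perron dimension $8$.

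The main obstacle I anticipate is establishing that $X$ is closed under duality, equivalently that maximality of the invertible part of $xx^*$ forces maximality of the invertible part of $x^*x$. The associativity computation supplies the decisive constraint, but extracting a contradiction relies crucially on $\FPdim x=2$, which caps both $\FPdim y$ and $m(y,x^*x)$ tightly enough that any hypothetical non-invertible constituent of $x^*x$ would drive $|G[x^*]|$ below $1$.
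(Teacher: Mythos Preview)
Your argument is correct. The paper does not actually prove this lemma in the text; it simply invokes \cite[Proposition~2.1.3]{Natale4}, so your proposal supplies a self-contained proof where the paper defers to the literature. The orbit analysis forcing the $2$-part of $|\Pic(\C)|$ to equal exactly $4$ and producing an odd set $X$, the associativity identity $(xx^*)x=x(x^*x)$ combined with the self-duality of $x^*x$ to show that $X$ is closed under duality, and the verification that $G[x]\cup\{x\}$ spans a fusion subring once $x=x^*$ and $|G[x]|=4$, are all sound.

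One small streamlining of the duality step: since dualising $hx=x$ gives $x^*h^{-1}=x^*$, the group $H=G[x]$ right-stabilises $x^*$, so
\[
(x^*x)^2=x^*(xx^*)x=\sum_{h\in H}(x^*h)x=|H|\,x^*x=4\,x^*x.
\]
Writing $x^*x=\sum_i c_iz_i$ with $z_i\in\Irr(\C)$ and comparing the multiplicity of $\1$ on both sides (using that $x^*x$ is self-dual) yields $\sum_i c_i^{\,2}=4=\sum_i c_i\,\FPdim z_i$, hence $\sum_i c_i(\FPdim z_i-c_i)=0$ directly, without isolating the coefficient of $x$.
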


\begin{proof} It follows from \cite[Proposition 2.1.3]{Natale4}.
\end{proof}

The following theorem is a restatement of \cite[Theorem
11]{Nichols} in the context of fusion categories. The theorem has
found many applications in the classification of low-dimensional
semisimple Hopf algebras, since such semisimple Hopf algebras
often have irreducible characters of degree $2$.

\begin{thm}\label{thm1}
Suppose $x \in \Irr(\C)$ is such that  $\FPdim x = 2$. Then at least one of the following holds:

(i)\, $G[x] \neq \1$.

(ii)\, $\C$ has a fusion subcategory $\D$ of type $(1, 2; 2, 1; 3, 2)$, such that $x \notin \Irr(\D)$ which has an invertible object $g$
of order $2$ such that $gx\neq x$.

(iii)\, $\C$ has a fusion subcategory of type $(1, 3; 3, 1)$ or $(1, 1; 3, 2; 4, 1; 5, 1)$. \qed
\end{thm}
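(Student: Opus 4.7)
The plan is to argue by case analysis on the decomposition of $xx^*$. Since $\C$ is integral and $\FPdim x = 2$, the product $xx^*$ has Frobenius-Perron dimension $4$ and decomposes as
$$xx^* = \sum_{g \in G[x]} g + \sum_i m_i y_i,$$
where the $y_i$ are non-invertible simple objects, so each $\FPdim y_i$ is an integer $\geq 2$. If $G[x] \neq \1$, alternative (i) holds and there is nothing to prove, so I assume throughout that $G[x] = \{\1\}$. The dimension constraints then force $xx^* = \1 + y$ with $y$ a simple object of Frobenius-Perron dimension $3$, which is necessarily self-dual because $(xx^*)^* = xx^*$.

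The next step is to analyze the decomposition of $y^2 = yy^*$, which has Frobenius-Perron dimension $9$. Writing $yy^* = \sum_{g \in G[y]} g + \sum_j n_j z_j$, the order of $G[y]$ divides $9$ by Lemma \ref{lem1}, so $|G[y]| \in \{1,3,9\}$; combined with the facts that $\1$ appears with multiplicity one and that every $z_j$ has integer Frobenius-Perron dimension $\geq 2$, this yields a short list of admissible decompositions. One of these recovers a fusion subcategory of type $(1,3;3,1)$, namely when $|G[y]|=3$ and the subring spanned by the three invertibles together with $y$ is already closed under multiplication. A different branch of the analysis, where the decomposition of $y^2$ involves a second three-dimensional simple together with simple constituents of dimensions $4$ and $5$, recovers the subring of type $(1,1;3,2;4,1;5,1)$; in both subcases alternative (iii) holds.

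To obtain alternative (ii), I would examine the product $xy$, which has Frobenius-Perron dimension $6$ and which by Frobenius reciprocity satisfies $m(x, xy) = m(y, xx^*) = 1$, so $x$ is a constituent of $xy$ with multiplicity one, leaving a remainder of dimension $4$. Tracking how this remainder decomposes produces, in the appropriate case, a simple object $x' \in \Irr_2(\C)$ with $x' \neq x$, a nontrivial invertible $g$ of order two stabilizing $x'$ with $gx \neq x$, and two three-dimensional simples, together spanning a fusion subring of the prescribed type $(1,2;2,1;3,2)$. Closure of this candidate subset under multiplication is then verified using Lemma \ref{prod-simples-categorico} and the dimension bookkeeping $d_i^2 = |G[x'_i]| + \sum m_j d_j$ recorded before Lemma \ref{prod-simples-categorico}.

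The main obstacle is the bookkeeping in the case analysis of $y^2$ and $xy$: one must match the combinatorial data (multiplicities, self-duality, Frobenius reciprocity, and stabilizer orders) to exactly one of the three listed alternatives while ruling out all configurations that are not subrings. Since the statement is a restatement of Nichols' theorem \cite{Nichols} in the language of Grothendieck rings, the arguments translate \emph{mutatis mutandis} using only the formal properties of $K_0(\C)$ recorded in Section \ref{prels}.
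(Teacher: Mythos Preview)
The paper does not supply its own proof of this theorem: the statement is marked with a \qed\ and is introduced as ``a restatement of \cite[Theorem 11]{Nichols} in the context of fusion categories,'' with the blanket remark at the start of Section~\ref{sec2} that such results ``only make use of the properties of the Grothendieck ring explained in Section~\ref{prels}'' and therefore ``work \textit{mutatis mutandis} in the fusion category setting.'' Your final paragraph says exactly this, so at the level of what the paper actually does, your proposal and the paper agree.

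Where you go further than the paper is in sketching the Nichols--Richmond case analysis itself. The opening move (if $G[x]=\1$ then $xx^*=\1+y$ with $y=y^*\in\Irr_3(\C)$) is correct and is indeed the entry point of the original argument. However, your outline of the subsequent branching is too coarse to stand as a proof on its own: you assert that the $y^2$ analysis splits cleanly into the $(1,3;3,1)$ and $(1,1;3,2;4,1;5,1)$ subrings and that the $xy$ analysis produces the $(1,2;2,1;3,2)$ subcategory, but you do not actually carry out the case split (for instance, $|G[y]|=9$ is listed as a possibility but never disposed of, and the several decompositions of $y^2$ with $|G[y]|=1$ are not enumerated). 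In the original Nichols--Richmond paper this bookkeeping is genuinely intricate and occupies several pages; your sketch names the right objects to look at but does not substitute for that work. Since the paper itself is content to cite \cite{Nichols}, this is not a discrepancy with the paper---just a caution that what you have written is an outline of the cited proof rather than the proof itself.
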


It follows that if $G[x] = \1$, then $\FPdim \C$ is divisible by $12$, $24$, or $60$. In particular,
$\FPdim \C$ is always even.

\medbreak In the next lemmas we list some consequences of Theorem
\ref{thm1}.

\begin{lem}\label{lem4} (i)\, Suppose that the order of $\Pic(\C)$ is odd. Assume that $\Irr_2(\C)\neq \emptyset$.
Then $\Irr_3(\C)\neq \emptyset$. Further, if $\Irr_4(\C)= \emptyset$ or $\Irr_5(\C)= \emptyset$,
then $\C$ has a fusion subcategory of type $(1, 3; 3, 1)$ and
hence $12$ divides $\FPdim \C$.

(ii)\, Suppose that $\Pic(\C) = \1$ and $\Irr_2(\C)\neq
\emptyset$. Then $\Irr_3(\C)$, $\Irr_4(\C)$ and $\Irr_5(\C)$ are
non-empty and $60$ divides $\FPdim \C$.

(iii)\, Suppose that $\Pic(\C)$ is of prime order $p \neq 3$ and
$\Irr_2(\C)\neq \emptyset$. If $\Irr_4(\C)= \emptyset$, then $\Pic(\C)$
is of order $2$ and $\C$ has a fusion subcategory of
Frobenius-Perron dimension $2 + 4\vert \Irr_2(\C) \vert$.

(iv)\, Suppose that $\Pic(\C)$ is of prime order $p$. Assume that
$\Irr_2(\C)\neq \emptyset$ and $\FPdim \C$ is not divisible by $12$.
Then $p = 2$ and $\C$ has a fusion subcategory of Frobenius-Perron
dimension $2 + 4\vert \Irr_2(\C) \vert$.
\end{lem}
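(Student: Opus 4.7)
The plan is to treat all four parts uniformly by applying Theorem \ref{thm1} to an arbitrary $x \in \Irr_2(\C)$ and using the specific hypothesis of each part to eliminate some of its three options.

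For (i), oddness of $|\Pic(\C)|$ forces $G[x]$ to be trivial, since by Lemma \ref{lem1} its order divides both $|\Pic(\C)|$ and $(\FPdim x)^2 = 4$; this kills option (i) of Theorem \ref{thm1}. Any subcategory of type $(1,2;2,1;3,2)$ as in option (ii) would force $\mathbb{Z}_2 \leq \Pic(\C)$, again contradicting odd order, so one of the two sub-options of option (iii) must hold, and both contain a simple of Frobenius-Perron dimension $3$. Under the extra assumption $\Irr_4(\C) = \emptyset$ or $\Irr_5(\C) = \emptyset$, the sub-option $(1,1;3,2;4,1;5,1)$ is further excluded, leaving a fusion subcategory of type $(1,3;3,1)$, of Frobenius-Perron dimension $12$. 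Part (ii) is analogous bookkeeping: triviality of $\Pic(\C)$ rules out options (i) and (ii) and the $(1,3;3,1)$ sub-option of (iii), all of which need a non-trivial invertible, so the $(1,1;3,2;4,1;5,1)$ sub-option must hold, giving simultaneously the non-emptiness of $\Irr_3(\C)$, $\Irr_4(\C)$, $\Irr_5(\C)$ and the divisibility $60 \mid \FPdim \C$.

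For (iv), the hypothesis $12 \nmid \FPdim \C$ is precisely what is needed to kill options (ii) and (iii) of Theorem \ref{thm1} for every $x \in \Irr_2(\C)$, since the associated fusion subcategories have Frobenius-Perron dimensions $24$, $12$, and $60$, all multiples of $12$. Hence option (i) holds for each such $x$: $G[x] \neq \1$, and since $\Pic(\C)$ is of prime order $p$ this forces $G[x] = \Pic(\C)$ and $p \mid 4$, i.e.\ $p = 2$. Then $G[x] \cap G[x'] = \Pic(\C) \neq \1$ for all $x, x' \in \Irr_2(\C)$, so Lemma \ref{prod-leq2}(b) yields a fusion subcategory of type $(1,2;2,|\Irr_2(\C)|)$, of Frobenius-Perron dimension $2 + 4|\Irr_2(\C)|$.

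Part (iii) follows the same scheme: the assumptions $p \neq 3$ and $\Irr_4(\C) = \emptyset$ eliminate both sub-options of (iii) of Theorem \ref{thm1}, so only options (i) or (ii) can occur for any $x \in \Irr_2(\C)$, and both force $p = 2$. Once we verify that option (ii) cannot actually occur, every $x \in \Irr_2(\C)$ satisfies $G[x] = \Pic(\C)$, and Lemma \ref{prod-leq2} again produces the desired fusion subcategory of Frobenius-Perron dimension $2 + 4|\Irr_2(\C)|$. The hard part is exactly this step: assuming some $x_0 \in \Irr_2(\C)$ has $G[x_0] = \1$, option (ii) of Theorem \ref{thm1} would give a fusion subcategory $\D \subseteq \C$ of type $(1,2;2,1;3,2)$ with $x_0 \notin \D$ and $gx_0 \neq x_0$ for the non-trivial $g \in \Pic(\C)$. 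I expect the contradiction to come from decomposing $x_0 x_0^*$ and $v_i x_0$ for $v_i \in \Irr_3(\D)$ and observing that, because $\Irr_4(\C) = \emptyset$, these decompositions force equalities among the simples of $\D$ incompatible with $gx_0 \neq x_0$; this is the main technical obstacle, the other three parts being essentially immediate applications of Theorem \ref{thm1} together with Lemma \ref{lem1} and Lemma \ref{prod-leq2}.
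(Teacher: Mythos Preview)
Your treatment of parts (i), (ii), and (iv) is correct and matches the paper's argument essentially line for line: eliminate options of Theorem~\ref{thm1} using the stated hypotheses, then invoke Lemma~\ref{lem1} and Lemma~\ref{prod-leq2}.

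Part (iii), however, has a genuine gap. You correctly reduce to showing that option~(ii) of Theorem~\ref{thm1} cannot occur, i.e.\ that $G[x]\neq\1$ for every $x\in\Irr_2(\C)$, but you do not carry this out; you only state that you ``expect the contradiction to come from'' certain fusion-rule computations. That computation, even if feasible, is not obvious: from $x_0x_0^*=\1+v$ with $v\in\Irr_3(\C)$ one must track how $v$ relates to $\Irr_3(\D)$ and to the action of $g$, and you give no indication of how the incompatibility actually arises.

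The paper sidesteps this entirely. Rather than working through Theorem~\ref{thm1}, it invokes \cite[Remark~2.2.2~(i)]{Natale4}: if $G[x]=\1$ for some $x\in\Irr_2(\C)$, then there exists $y\in\Irr_3(\C)$ with $|G[y]|=3$. This immediately forces $3\mid|\Pic(\C)|=p$, contradicting $p\neq 3$. Hence $G[x]\neq\1$ for all $x\in\Irr_2(\C)$, so $G[x]=\Pic(\C)$, $p\mid 4$, $p=2$, and Lemma~\ref{prod-leq2} finishes as you indicate. You should either carry out your fusion-rule argument in full or replace it with this citation; as written, part~(iii) is incomplete.
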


\begin{proof} (i) The assumption implies that $G[x] = \1$, for all
$x \in \Irr_2(\C)$. Moreover, $\C$ cannot have fusion
subcategories of type $(1, 2; 2, 1; 3, 2)$ or $(1, 3;
3, 1)$. Part (i) then follows from Theorem \ref{thm1}.

(ii) Follows directly from Theorem \ref{thm1}.
%also from \cite[Remark 2.2.2 (i)]{Natale4}.

(iii) Let $x \in \Irr_2(\C)$. Assume first that $G[x] = \1$. By
\cite[Remark 2.2.2 (i)]{Natale4}, there exist $y \in \Irr_3(\C)$ such
that $|G[y]| = 3$. Then $3$ divides $|\Pic(\C)|$, which is a
contradiction. Therefore $G[x] \neq \1$ and hence $G[x] =
\Pic(\C)$ is of order $2$. Since this holds for all $x \in
\Irr_2(\C)$, part (iii) follows from  Lemma \ref{prod-leq2}.

(iv)\, Let $x \in \Irr_2(\C)$. Since $12$ does not divide $\FPdim
\C$, Theorem \ref{thm1} implies that $G[x] \neq \1$. Hence $G[x] =
\Pic(\C)$, for all $x \in \Irr_2(\C)$, and therefore $p = 2$. Part
(iv) follows from Lemma \ref{prod-leq2}.
\end{proof}

\begin{lem}\label{lem2}
Suppose that $\FPdim x = 2$ for some  $x \in \Irr(\C)$. Let $n_0$
be the order of $\Pic(\C)$ and let $s = |\Irr_2(\C)|$. Assume one
of the following conditions hold:

(i)\, $12$ does not divide $\FPdim \C$ and $\Irr_4(\C) =
\emptyset$.
%does not have simple objects of Frobenius-Perron dimension $4$, or
%, then $\C$ has a fusion subcategory of type $(1, n_0; 2, s)$.
%In particular, $\FPdim \C$ is divisible by $12$ or $n_0+4s$.

(ii)\, $n_0 = 2$ and  $\FPdim \C$ is not divisible by $24$ or
$60$.
%and $12$ does not divide $\FPdim \C$, or
%, then $\C$ has a fusion subcategory of type $(1, n_0; 2, s)$.
%then either $12$ or $n_0+4s$ divides $\FPdim \C$.

(iii)\, $n_0 = 2$ and $\Irr_3(\C) = \emptyset$.
%$\C$ does not have simple objects of Frobenius-Perron dimension $3$ or $4$.
%$n_0+4s$ divides $\FPdim \C$.

(iv)\, $\Irr_3(\C) = \Irr_4(\C) = \emptyset$.

Then $\C$ has a fusion subcategory of type $(1, n_0; 2, s)$. In
particular, $\FPdim \C$ is divisible by $n_0+4s$.
\end{lem}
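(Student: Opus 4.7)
The plan is to reduce each of the four cases to one of the two hypotheses (a) or (b) of Lemma \ref{prod-leq2}, which then produces the fusion subcategory of type $(1, n_0; 2, s)$ spanned by $\{x \in \Irr(\C) : \FPdim x \leq 2\}$; its Frobenius-Perron dimension equals $n_0 + 4s$, which must then divide $\FPdim \C$. The main tool is Theorem \ref{thm1}: if some $x \in \Irr_2(\C)$ satisfies $G[x] = \1$, then $\C$ admits a fusion subcategory of one of the three ``exceptional'' types $(1,2;2,1;3,2)$, $(1,3;3,1)$, or $(1,1;3,2;4,1;5,1)$, of Frobenius-Perron dimensions $24$, $12$, and $60$ respectively. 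In all four cases the strategy is to use the stated assumptions to rule out every such exceptional subcategory, thereby forcing $G[x] \neq \1$ for every $x \in \Irr_2(\C)$.

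For cases (i) and (iv) the target is hypothesis (a). In case (i), the three exceptional dimensions $24$, $12$, and $60$ are each divisible by $12$, contradicting $12 \nmid \FPdim \C$. In case (iv), each of the three exceptional types contains simple objects of Frobenius-Perron dimension $3$ or $4$, contradicting $\Irr_3(\C) = \Irr_4(\C) = \emptyset$. Combined with the explicit hypothesis $\Irr_4(\C) = \emptyset$ in both cases, this verifies Lemma \ref{prod-leq2}(a).

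For cases (ii) and (iii), where $n_0 = 2$, the target is hypothesis (b). In case (iii) each of the three exceptional types contains a simple of Frobenius-Perron dimension $3$, contradicting $\Irr_3(\C) = \emptyset$. In case (ii) the exceptional type $(1,2;2,1;3,2)$ has Frobenius-Perron dimension $24$ and the type $(1,1;3,2;4,1;5,1)$ has dimension $60$, both forbidden by hypothesis, while the type $(1,3;3,1)$ would contribute three invertible objects to $\Pic(\C)$, forcing $3 \mid |\Pic(\C)| = 2$, a contradiction. Hence $G[x] \neq \1$ for every $x \in \Irr_2(\C)$, and because $\Pic(\C)$ has prime order $2$, necessarily $G[x] = \Pic(\C)$ for every such $x$; then any two such stabilizers intersect in $\Pic(\C) \neq \1$, so Lemma \ref{prod-leq2}(b) applies. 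The only nontrivial bookkeeping is case (ii), where the three exceptional types must be eliminated by three distinct arguments; elsewhere the analysis is a direct inspection of the dimensions and types produced by Theorem \ref{thm1}.
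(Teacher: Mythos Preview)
Your proof is correct and follows essentially the same approach as the paper: in each case you use the hypotheses to rule out the exceptional subcategories of Theorem \ref{thm1}, conclude $G[x] \neq \1$ for all $x \in \Irr_2(\C)$, and then invoke Lemma \ref{prod-leq2}. Your write-up is in fact more explicit than the paper's in two respects: you spell out which hypothesis (a) or (b) of Lemma \ref{prod-leq2} is used in each case, and in case (ii) you give all three eliminations (the paper only mentions ruling out type $(1,3;3,1)$, leaving the reader to infer that the divisibility hypotheses on $24$ and $60$ handle the other two).
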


\begin{proof}
(i)\,Suppose that $12$ does not divide $\FPdim \C$. Then Theorem
\ref{thm1} shows that $G[x]\neq \1$ for all $x\in \Irr_2(\C)$.
Hence (i) follows from Lemma \ref{prod-leq2}.
%We shall show that $x'x^*$ is a sum of elements $y \in \Pic(\C) \cup
%\Irr_2(\C)$, for all $x',x\in \Irr_2(\C)$. In fact, if there
%exists $y \in \Irr(\C)$ of Frobenius-Perron dimension $3$ such
%that $m(y,x'x^*)=1$, then there exists a unique $g\in \Pic(\C)$
%such that $m(g,x'x^*)=1$ or equivalently $x'=gx$. But the set of
%all such $g$'s is a coset of the stabilizer $G[x]$, therefore
%there must be two of them. This implies the claim.
%
%As a consequence, the set of all $x \in \Irr(\C)$ such that
%$\FPdim x \leq 2$ spans a fusion subring of $K_0(\C)$. It follows
%that $\C$ has a fusion subcategory of type $(1, n_0; 2, s)$. This
%implies the first statement.

(ii)\, In this case $\C$ cannot have fusion subcategories of type
$(1, 3; 3, 1)$. It follows from Theorem \ref{thm1} that
$G[x]=\Pic(\C)$, for all $x\in \Irr_2(\C)$. Then (ii) follows from
Lemma \ref{prod-leq2}.
%By Lemma \ref{prod-simples-categorico}, the product $xx'$ is
%not simple, for all $x',x\in \Irr_2(\C)$. Then the
%second statement follows from a similar argument as above.

(iii)\, By Theorem \ref{thm1}, $G[x] \neq \1$, for all $x \in
\Irr_2(\C)$.
%Since $\Pic(\C)$ is of order $2$, then $G[x] =
%\Pic(\C)$, for all $x \in \Irr_2(\C)$.
Hence (iii) follows from Lemma \ref{prod-leq2}.

(iv)\, Since $\Irr_3(\C) = \emptyset$, Theorem \ref{thm1} implies
that $G[x] \neq \1$, for all $x \in \Irr_2(\C)$. Then $G[x] =
\Pic(\C)$, for all $x \in \Irr_2(\C)$, and (iv) follows also from
Lemma \ref{prod-leq2}.
\end{proof}

The next proposition is a restatement of some results of this
section as well as other results in the literature, in terms of
fusion category types. It provides us with conditions that can be
easily handled by a computer.

\begin{prop}\label{prop1}
Let $\C$ be an integral fusion category over $k$.

(i)\, If $60$ does not divide $\FPdim \C$, then $\C$ cannot be of
type $(1,1; 2,m; \cdots)$.

(ii)\, If $8$ does not divide $\FPdim \C$, then $\C$ cannot be of
type $(1,n_0;2,m; \cdots)$, where $n_0$ is divisible by $4$ and $m$
is odd.

(iii)\, If $n_0$ does not divide $\FPdim \C$ or $n_id_i^2$, for some
$2\leq i\leq s$, then $\C$ cannot be of type
$(1,n_0;d_2,n_2;\cdots;d_s,n_s)$.

(iv)\, If $t$ does not divide $n_0$, then $\C$ cannot be of type
$(1,n_0; t,n)$.

(v)\, If $s \leq 2$,  then $\C$ cannot be of type $(1,1;
d_1,n_1;\cdots,d_s,n_s)$.

(vi)\, If neither $24$ nor $60$ nor
$2+4m$ divide $\FPdim \C$, then $\C$ cannot be of type
$(1,2;2,m;\cdots)$.

(vii)\, If neither $12$ nor $n_0 + 4m$ divide $\FPdim \C$, and
$\C$ does not have simple objects of Frobenius-Perron dimension
$4$, then $\C$ cannot be of type $(1,n_0;2,m;\cdots)$.

(viii)\, If $n_0 + 4m$ does not divide $\FPdim \C$ and $d\geq 5$,
then $\C$ cannot be of type $(1,n_0; 2,m; d,n; \cdots)$.
\end{prop}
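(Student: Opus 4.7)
The proposition packages several corollaries of the material from Sections \ref{prels} and \ref{sec2} into a form well-suited for a mechanised search. I would prove each item separately by invoking the relevant earlier lemma. Items (i)--(iv) are essentially one-line consequences. For (i), a type $(1,1;2,m;\ldots)$ with $m\geq 1$ means $\Pic(\C)=\1$ and $\Irr_2(\C)\neq\emptyset$, so Lemma \ref{lem4}(ii) yields $60\mid\FPdim\C$. Item (ii) is Lemma \ref{lema-8}: when $4\mid n_0=|\Pic(\C)|$ and $|\Irr_2(\C)|$ is odd, one obtains a non-pointed fusion subcategory of Frobenius-Perron dimension $8$, forcing $8\mid\FPdim\C$. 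For (iii), the divisibility $n_0\mid\FPdim\C$ is immediate because $\C_{\pt}$ is a fusion subcategory of dimension $n_0$, while $n_0\mid n_id_i^2$ is Lemma \ref{lem1}(ii) applied to any simple object of Frobenius-Perron dimension $d_i$. Item (iv) is the special case of Lemma \ref{lem3} pointed out in the remark following its proof.

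For item (v) the starting point is Lemma \ref{lem3}, which forces $\gcd(d_1,\dots,d_s)\mid 1$. The case $s=1$ gives $d_1=1$, contradicting $d_1>1$. For $s=2$, coprimality of $d_1,d_2$ combined with Theorem \ref{thm1} applied to a simple of Frobenius-Perron dimension $2$, and with Lemma \ref{lem4}(ii) (which would force four distinct non-trivial dimensions whenever $\Irr_2(\C)\neq\emptyset$), rules out $d_1=2$; the residual sub-case $d_1,d_2\geq 3$ is handled by a Diophantine analysis of the dimension equation together with Lemma \ref{lem1}(ii).

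Items (vi), (vii), (viii) are direct applications of Lemma \ref{lem2}. For (vi) I would invoke Lemma \ref{lem2}(ii): with $n_0=2$ and $\FPdim\C$ divisible by neither $24$ nor $60$, there is a fusion subcategory of type $(1,2;2,m)$ of dimension $2+4m$ dividing $\FPdim\C$. For (vii) I would invoke Lemma \ref{lem2}(i): with $12\nmid\FPdim\C$ and $\Irr_4(\C)=\emptyset$, the same kind of subcategory appears, this time of dimension $n_0+4m$. For (viii) I would invoke Lemma \ref{lem2}(iv): the hypothesis $d\geq 5$ forces $\Irr_3(\C)=\Irr_4(\C)=\emptyset$, so the lemma again yields a fusion subcategory of dimension $n_0+4m$. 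The only step I expect to require more than a direct citation is item (v) for $s=2$, where the gcd obstruction alone is insufficient and must be supplemented by Theorem \ref{thm1} together with a finer arithmetic analysis.
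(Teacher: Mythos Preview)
Your treatment of items (i)--(iv) and (vi)--(viii) is correct and matches the paper's proof almost verbatim: the paper also derives (i) from Theorem~\ref{thm1} (your route via Lemma~\ref{lem4}(ii) is just the repackaged form), (ii) from Lemma~\ref{lema-8}, (iii) from Lemma~\ref{lem1}, (iv) from Lemma~\ref{lem3}, and (vi)--(viii) from Lemma~\ref{lem2}.

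The gap is in item (v). The paper does \emph{not} argue (v) from the internal lemmas; it simply cites \cite[Lemma~11]{szhu}. Your sketch handles $s=1$ correctly (Lemma~\ref{lem3} forces $d_1\mid 1$) and the sub-case $s=2$, $d_1=2$ correctly (Lemma~\ref{lem4}(ii) forces four distinct non-trivial dimensions). But for $s=2$ with $d_1,d_2\geq 3$ you invoke ``a Diophantine analysis of the dimension equation together with Lemma~\ref{lem1}(ii)''. This does not work: when $n_0=1$, Lemma~\ref{lem1}(ii) reduces to the tautology $1\mid n_id_i^2$, so it contributes nothing, and the bare relations $d_1^2-1=Ad_1+Bd_2$, $d_2^2-1=Cd_1+Dd_2$ with $\gcd(d_1,d_2)=1$ admit plenty of nonnegative solutions (e.g.\ $d_1=3$, $d_2=4$ gives $8=0\cdot 3+2\cdot 4$), so no purely arithmetic obstruction falls out. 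Ruling out this sub-case genuinely needs the result of Zhu, which is why the paper cites it rather than reproving it. You should replace your paragraph on (v) by a direct reference to \cite[Lemma~11]{szhu}; the remark preceding Lemma~\ref{lem1} in Section~\ref{sec2} explains why that Hopf-algebra statement transfers verbatim to the fusion-category setting.
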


\begin{proof}
Part (i) follows from Theorem \ref{thm1}. Part (ii) follows from
Lemma \ref{lema-8}. Part (iii) follows from Lemma \ref{lem1}. Part
(iv) follows from Lemma \ref{lem3}. Part (v) follows from
\cite[Lemma 11]{szhu}. Parts (vi), (vii) and (viii) follow from
Lemma \ref{lem2}.
\end{proof}

We end this section with some further applications of Theorem
\ref{thm1} that will be used later on (c.f. the proof of Theorem
\ref{thm3}).

\begin{prop}\label{irr2-zdec} Let $n \geq 1$ be an odd natural number and let $\C$ be an integral fusion category such that
$\FPdim \C = 2n$. Assume in addition that $\Irr_2(\Z(\C)) \neq
\emptyset$. Then $\C$ is a $\mathbb Z_p$-extension or a $\mathbb Z_p$-equivariantization of a fusion category $\D$, for some prime number $p$.
\end{prop}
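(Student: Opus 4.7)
The plan is to invoke Lemma~\ref{ker-U} for the forgetful functor $F\colon \Z(\C)\to\C$ and branch on whether the kernel $K$ of the induced homomorphism $F_0\colon \Pic(\Z(\C))\to\Pic(\C)$ is trivial.

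If $K\neq 1$, then Lemma~\ref{ker-U}(i) exhibits $\C$ as a fusion category faithfully graded by $K$. Picking any prime $p\mid|K|$ and any index-$p$ subgroup $H\leq K$, the induced $K/H\cong\mathbb Z_p$-grading makes $\C$ a $\mathbb Z_p$-extension of $\bigoplus_{h\in H}\C_h$, yielding the first alternative of the conclusion.

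If instead $K=1$, so that $F_0$ is injective, then $|\Pic(\Z(\C))|$ divides $\FPdim\C=2n$; in particular every invertible object of $\Z(\C)$ has order dividing $2n$, and $|G[x]|$ divides $\gcd(4,2n)=2$ for every $x\in\Irr_2(\Z(\C))$. I would then apply Theorem~\ref{thm1} inside the braided category $\Z(\C)$ to a fixed $x\in\Irr_2(\Z(\C))$: alternative~(ii) would require a fusion subcategory of $\Z(\C)$ of Frobenius-Perron dimension $24$, which is impossible since $24\nmid 4n^2=\FPdim\Z(\C)$ for $n$ odd. The remaining alternatives provide a non-trivial pointed fusion subcategory $\mathcal{P}\subseteq\Z(\C)$ of prime order $p$: a cyclic group of order $2$ coming from $G[x]$ in alternative~(i); the pointed $\mathbb Z_3$-subgroup of invertibles of the $(1,3;3,1)$-type subcategory in the first sub-case of alternative~(iii); or a pointed subquotient extracted from the $(1,1;3,2;4,1;5,1)$-type subcategory, whose fusion rules coincide with those of $\rep\mathbb A_5$, in the last sub-case.

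The key remaining step is to verify that $\mathcal{P}$ is \emph{Tannakian} in $\Z(\C)$, so that $\mathcal{P}\cong\rep\mathbb Z_p$. Once this is done, the composition $\rep\mathbb Z_p\cong\mathcal{P}\hookrightarrow\Z(\C)\xrightarrow{F}\C$ is fully faithful by injectivity of $F_0$, providing a central embedding of $\rep\mathbb Z_p$ into $\C$, equivalent to $\C$ being a $\mathbb Z_p$-equivariantization of a fusion category $\D$. The main obstacle I foresee is this Tannakian verification: one must exclude that the pointed order-$2$ subcategory in alternative~(i) is super-Tannakian (i.e.\ equivalent to super-vector spaces rather than $\rep\mathbb Z_2$), rule out non-trivial braidings on the pointed $\mathbb Z_3$-part in the first sub-case of alternative~(iii), and most delicately handle the $\rep\mathbb A_5$-type subcategory, which contains no non-trivial invertible objects of its own, by either producing a Tannakian $\rep\mathbb Z_p$ from elsewhere in $\Z(\C)$ via modularity or by showing this sub-case cannot arise under the standing hypotheses. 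Modularity of $\Z(\C)$ together with the ribbon identity $\theta_g\cdot(c_{x,g}c_{g,x})=1$ forced by $g\otimes x\cong x$ should be the principal tool for these verifications.
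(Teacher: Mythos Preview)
Your overall strategy matches the paper's: find a Tannakian subcategory of prime order in $\Z(\C)$, then conclude via \cite[Propositions~2.9, 2.10]{ENO2}. The case split on $\ker F_0$ is also right. But the execution of the $K=1$ branch has real gaps.

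You miss a key simplification: by \cite[Theorem~2.11]{ENO2}, simple objects of $\Z(\C)$ have Frobenius--Perron dimension dividing $2n$, so $\Irr_4(\Z(\C))=\emptyset$. This instantly excludes the $(1,1;3,2;4,1;5,1)$ sub-case of alternative~(iii) in Theorem~\ref{thm1}, which you flag as ``most delicate''; your ``pointed subquotient'' idea there is a non-starter anyway, since that category has trivial $\Pic$. More seriously, your Tannakian verification in alternative~(i) does not work as stated: the balancing identity gives $\theta_g\,(c_{x,g}c_{g,x})=1$, which constrains the double braiding of $g$ with $x$ but says nothing about $c_{g,g}$ (equivalently $\theta_g$), so it cannot rule out $\theta_g=-1$ (super-vector spaces) or $\theta_g=\pm i$. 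The paper avoids this entirely: using that $\Pic(\Z(\C))$ has a unique order-$2$ subgroup (since $4\nmid|\Pic(\Z(\C))|$), it applies Lemma~\ref{prod-leq2} to produce a braided subcategory of type $(1,n_0;2,n)$ and then invokes \cite[Lemma~4.7]{cd2-wint} to extract a Tannakian subcategory from it. Likewise, in the $(1,3;3,1)$ sub-case the pointed $\mathbb Z_3$ part may well be non-degenerate; the paper instead examines the M\"uger center of the whole $(1,3;3,1)$ subcategory $\D$: if trivial, then $\Z(\C)\cong\D\boxtimes\D'$ with $\FPdim\D'$ odd yet $\Irr_2(\D')\neq\emptyset$, a contradiction; otherwise the center contains $\rep\mathbb Z_3$, automatically Tannakian as an odd-dimensional symmetric category.
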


\begin{proof} It will be enough to show that $\Z(\C)$ has a nontrivial Tannakian
subcategory $\E$. In this case, $\E \cong \rep \Gamma$
where the order of $\Gamma$ divides $\FPdim \C = 2n$: indeed,  $\E
\subseteq \E'$, where $\E'$ denotes the M\"uger's centralizer of $\E$ in $\Z(\C)$ and, by \cite[Theorem 3.2]{mueger}, $\FPdim \E \FPdim \E' = (\FPdim \C)^2$. In particular, $\Gamma$ is solvable (see e.g. \cite[Theorem 1.35]{isaacs}), and it follows that $\E$, and
hence also $\Z(\C)$, contains a Tannakian subcategory of prime
dimension. By \cite[Propositions 2.9 and 2.10]{ENO2}, $\C$ is an
equivariantization or an extension of a fusion category $\D$.

Consider the forgetful functor $F: \Z(\C) \to \C$. Suppose first that the universal grading group $U(\C)$ is not trivial. Since $\C$ is a $U(\C)$-extension of its adjoint fusion subcategory $\C_{ad}$, it follows from \cite[Proposition 2.9 (ii)]{ENO2} that $\Z(\C)$ contains the category $\rep U(\C)$ as a Tannakian subcategory. The above discussion shows that the proposition holds in this case. Therefore we may assume that $U(\C)$ is trivial. It follows from Lemma \ref{ker-U} (ii) that $F$ induces an injective group homomorphism $\Pic(\Z(\C)) \to \Pic(\C)$.
Then the order of $\Pic(\Z(\C))$
cannot be divisible by $4$ (since $4$ does not divide $\FPdim
\C$).
By \cite[Theorem 2.11]{ENO2}, the Frobenius-Perron dimensions of simple objects of $\Z(\C)$
divide $\FPdim \C$. Hence
$\Z(\C)$ has no simple objects of Frobenius-Perron dimension $4$.

Suppose that $\Z(\C)$ has a fusion subcategory $\D$ of
type $(1, 3; 3, 1)$. Let $\Z_2(\D) \subseteq \D$ denote the M\"
uger center of $\D$. If $\Z_2(\D)$ is trivial, then $\D$ is
non-degenerate and in view of \cite[Theorem 4.2]{mueger}, \cite[Theorem 3.13]{DGNOI}, there is an equivalence of braided fusion
categories $\Z(\C) \cong \D \boxtimes \D'$, where $\D'$ is the M\"
uger centralizer of $\D$. In particular, $\D'$ must have simple
objects of Frobenius-Perron dimension $2$. But this is impossible
since $\FPdim \D'$ is odd \cite[Theorem 2.11]{ENO2}.
Hence $\Z_2(\D)$ is not trivial, and therefore $\rep \mathbb Z_3
\subseteq \Z_2(\D)$. But $\rep \mathbb Z_3$, being an
odd-dimensional symmetric fusion category, is Tannakian \cite[Corollary 2.50 (i)]{DGNOI}. Hence we
are done in this case.  We may thus assume that $\Z(\C)$ has no
fusion subcategory $\D$ of type $(1, 3; 3, 1)$.

\medbreak  Let $x \in \Irr_2(\Z(\C))$. Since $\FPdim \Z(\C)$ is
not divisible by $24$ and $\Irr_4(\Z(\C)) = \emptyset$, it
follows from Theorem \ref{thm1} and the discussion in the previous
paragraph, that $G[x] \neq \1$. Since the
order of $\Pic(\Z(\C))$ is not divisible by $4$, then $G[x]$ is of
order $2$, for all $x \in \Irr_2(\Z(\C))$. Furthermore, the
abelian group $\Pic(\Z(\C))$ has a unique subgroup of order $2$,
and therefore $G[x] = G[x'] \simeq \mathbb Z_2$, for all $x, x'
\in \Irr_2(\Z(\C))$. Lemma \ref{prod-leq2} implies that $\Z(\C)$
has a fusion subcategory $\D$ of type $(1, n_0; 2, m)$, where $n_0
= \vert \Pic(\C) \vert$ and $m \geq 1$. Since  $\D$ is braided,
then $\D$ contains a Tannakian subcategory, namely, the M\" uger
center of the fusion subcategory generated by a self-dual object
of $\Irr_2(\D)$ (see \cite[Lemma 4.7]{cd2-wint}). This finishes
the proof of the proposition.
\end{proof}

\begin{cor}\label{90-6} Let $\C$ be an integral fusion category such that
$\FPdim \C = 90$. Suppose that $\C$ has a fusion subcategory $\D$
of Frobenius-Perron dimension $6$. Then $\C$ is weakly
group-theoretical. \end{cor}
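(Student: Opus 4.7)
The plan is to split on whether the universal grading group $U(\C)$ is trivial, following the template used in the proof of Proposition \ref{irr2-zdec}. The first case requires nothing from the existence of $\D$, whereas the second case is where the hypothesis $\FPdim \D = 6$ is essential.

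If $U(\C) \neq 1$, then $\C$ is a faithful $U(\C)$-extension of its adjoint subcategory $\C_{\ad}$, and $\FPdim \C_{\ad}$ is a proper divisor of $90$, hence lies in $\{1, 2, 3, 5, 6, 9, 10, 15, 18, 30, 45\}$. Every integer in this list is of the form $p^aq^b$ or $pqr$ for primes $p,q,r$, so by \cite[Theorem 1.6]{ENO2} the category $\C_{\ad}$ is solvable, in particular weakly group-theoretical; it follows that $\C$ itself is weakly group-theoretical, as an extension of a weakly group-theoretical category by the solvable group $U(\C)$.

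If $U(\C) = 1$, the plan is to apply Proposition \ref{irr2-zdec}, which requires producing a simple object of Frobenius--Perron dimension $2$ in $\Z(\C)$. Here the hypothesis on $\D$ enters: an integral fusion category of Frobenius--Perron dimension $6$ has type $(1, 6)$ or $(1, 2; 2, 1)$, and in either case contains a pointed subcategory of order $2$, while in the second case $\D$ also has a self-dual simple $X$ with $X \otimes X = \1 + g + X$ (the fusion rules of $\rep S_3$). I would exploit this structure together with Lemma \ref{ker-U}(ii), which in the present setting gives an injection $F_0 : \Pic(\Z(\C)) \hookrightarrow \Pic(\C)$, and the forgetful/induction adjunction $F, I$ between $\C$ and $\Z(\C)$, to locate a $\FPdim 2$ simple of $\Z(\C)$ or, alternatively, a nontrivial Tannakian subcategory of $\Z(\C)$. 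Once such a simple or subcategory is produced, Proposition \ref{irr2-zdec} (or directly \cite[Propositions 2.9 and 2.10]{ENO2}) exhibits $\C$ as a $\mathbb Z_p$-extension or $\mathbb Z_p$-equivariantization of some $\D'$ with $\FPdim \D' \in \{45, 30, 18\}$, and the same dimension count as above shows $\D'$ is solvable, so $\C$ is weakly group-theoretical.

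The main obstacle I foresee is the subcase $U(\C) = 1$ with $\D$ pointed abelian (underlying group $\mathbb Z_6$), since then $\Z(\D)$ is itself pointed and offers no $\FPdim 2$ simple that might be transported to $\Z(\C)$. In this subcase the argument must exploit the full inclusion $\D \subseteq \C$ rather than just the abstract structure of $\D$, using the injectivity of $F_0$ together with the six invertibles of $\Pic(\C)$ coming from $\D$ to construct the required simple or Tannakian subcategory of $\Z(\C)$ directly.
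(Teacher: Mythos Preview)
Your split on $U(\C)$ is a reasonable opening, and the case $U(\C) \neq 1$ is fine: $\C_{\ad}$ then has Frobenius--Perron dimension a proper divisor of $90$, hence of the form $p^aq^b$ or $pqr$, so it is weakly group-theoretical by \cite{ENO2}; since an extension of a weakly group-theoretical category is again weakly group-theoretical, so is $\C$. (Incidentally this case does not use the hypothesis on $\D$ at all.)

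The gap is the case $U(\C) = 1$. You correctly identify the target---a simple of Frobenius--Perron dimension $2$ in $\Z(\C)$, or more generally a nontrivial Tannakian subcategory---but you supply no mechanism for reaching it. There is no functor $\Z(\D) \to \Z(\C)$ for a general fusion subcategory $\D \subseteq \C$, and simple objects of $\C$ do not lift to simples of $\Z(\C)$ of the same dimension; so the presence of a $\FPdim 2$ simple in $\D$ (when $\D$ has type $(1,2;2,1)$) says nothing direct about $\Irr_2(\Z(\C))$. The injectivity of $F_0$ from Lemma \ref{ker-U} only bounds $|\Pic(\Z(\C))|$; it does not produce higher-dimensional simples. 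Your acknowledged obstacle in the pointed subcase is in fact present in both subcases.

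The paper uses the hypothesis on $\D$ in an entirely different way. After invoking Proposition \ref{irr2-zdec} to reduce to $\Irr_2(\Z(\C)) = \emptyset$, it passes to the induction $A = I(\1)$ in $\Z(\C)$: the subcategory $\D$ corresponds to a commutative subalgebra $B \subseteq A$ with $\FPdim B = 90/6 = 15$. Since by \cite[Theorem 2.11]{ENO2} every simple of $\Z(\C)$ has dimension dividing $90$, and dimension $2$ is excluded, one checks that every decomposition of $B$ as an object of $\Z(\C)$ contains a simple of dimension $5$ (and one of dimension $3$ or $9$). This is what the hypothesis on $\D$ actually buys: a simple of odd prime-power dimension in $\Z(\C)$, which via \cite[Corollary 7.2]{ENO2} yields a nontrivial symmetric subcategory $\E \subseteq \Z(\C)$. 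Even then the argument is not over: one must dispose of the possibility that $\E$ is the category of super vector spaces rather than Tannakian, and this requires a substantial further analysis of centralizers and the $S$-matrix in $\Z(\C)$. None of this is visible from your outline; the second half of your proposal is a statement of intent rather than a proof.
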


\begin{proof} By Proposition \ref{irr2-zdec}, we may
assume that $\Irr_2(\Z(\C)) = \emptyset$. Indeed, if $\Z(\C)$ had a simple object of Frobenius-Perron dimension $2$, then  $\C$ would be an
equivariantization or an extension of a fusion category $\D$, whose Frobenius-Perron dimension divides $90$ and is less than $90$.
In view of the results of \cite{ENO2}, the fusion category $\D$
(and then also $\C$) must be weakly group-theoretical.
As in the proof of Proposition \ref{irr2-zdec}, it will be enough to show that
$\Z(\C)$ has a nontrivial Tannakian subcategory. To do this, we
shall follow the lines of the proof of \cite[Theorem 9.16]{ENO2}.

\medbreak Let $I: \C \to \Z(\C)$ be the left adjoint of  the forgetful
functor $F: \Z(\C) \to \C$.  Then $A = I(\1)$ is a commutative
algebra in $\Z(\C)$ such that $\C \cong \Z(\C)_A$ as fusion
categories \cite{ENO2}. We may assume that $A$ contains no
nontrivial invertible object of $\Z(\C)$: indeed, if $m(g, A) \neq
0$, where $\1 \neq g \in \Pic(\Z(\C))$, then $m(\1, F(g)) \neq 0$
and therefore $F(g) \cong \1$. By Lemma \ref{ker-U}, $\C$ is faithfully graded by $\widehat N$, where $N = \langle g \rangle$ is the subgroup
generated by $g$ (in particular, $\Z(\C)$ contains a Tannakian subcategory) and we are done.

The fusion subcategory $\D \subseteq \C$ corresponds to a subalgebra $B$ of $A$
such that $\FPdim B = \FPdim \C/\FPdim \D = 15$. In view of
\cite[Theorem 2.11]{ENO2}, the Frobenius-Perron dimensions of
simple objects of $\Z(\C)$ divide  $90$.  Hence the possible decompositions of $B$ as an
object of $\Z(\C)$ are the following:
\begin{equation}\label{decomposition-b}\1 \oplus X_3 \oplus X'_3\oplus X''_3 \oplus X_5,
\quad \1 \oplus X_3 \oplus X_5 \oplus X_6, \quad \1 \oplus X_5
\oplus X_9,\end{equation} where $X_3$, $X'_3$, $X''_3$ are simple objects of
Frobenius-Perron dimension $3$, and $X_5$, $X_6$, $X_9$ are simple
objects of Frobenius-Perron dimensions $5$, $6$ and $9$,
respectively.

It follows that $\Z(\C)$ has simple objects of prime power
dimension, and therefore it contains a nontrivial symmetric
subcategory \cite[Corollary 7.2]{ENO2}. We may assume $\Z(\C)$ has
a unique nontrivial symmetric subcategory $\E$, which is
equivalent to the category of super vector spaces. Note that $\E
\subseteq \Z(\C)_{\pt}$ and $\FPdim(\E) = 2$. Let $\Z = \E'
\subseteq \Z(\C)$ be the M\" uger  centralizer of $\E$. Then $\E
\subseteq \Z$, since $\E$ is symmetric, and $\Z$ is a slightly
degenerate (as $\Z_2(\E') = \E$) fusion subcategory of
Frobenius-Perron dimension $(\FPdim \C)^2/2$.

In particular the group $G =\Pic(\Z(\C)) \neq \1$ and, since
$\Pic(\Z(\C))$ is isomorphic to the universal grading group of
$\Z(\C)$ \cite[Theorem 6.2]{gel-nik}, there is a faithful
$G$-grading on $\Z(\C)$, with trivial component  $\Z(\C)_{\ad}$.
By \cite[Corollary 6.8]{gel-nik}, we have $\Z(\C)'_{\ad} =
\Z(\C)_{\pt}$. Since $\E \subseteq \Z(\C)_{\pt}$, then
$\Z(\C)'_{\pt} = \Z(\C)_{\ad} \subseteq \Z$.

Note that if the slightly degenerate integral braided category
$\Z$ has a simple object of odd prime power dimension, then it
follows from \cite[Proposition 7.4]{ENO2} that $\Z$, and thus also
$\Z(\C)$, contains a nontrivial Tannakian subcategory. Then  we
are done in this case.

We may therefore assume that $\Z$ has no simple object of odd prime power dimension. Consider first the case where $\Irr_3(\Z(\C)) \neq
\emptyset$ and let $X$ be a simple object with $\FPdim X = 3$. We
have $X\otimes X^* \in \Z(\C)_{\ad}$. Since $\Z(\C)_{\ad} \subseteq \Z$, then $X\otimes X^*$ has no simple constituents of odd
prime power dimension. This implies that $X\otimes X^* =
\bigoplus_{a \in G[X]} a \bigoplus Y$, where  $\vert G[X] \vert =
3$ and $Y$ is a simple object of Frobenius-Perron dimension $6$, or $X \otimes X^* = \bigoplus_{a \in G[X]} a$ with $|G[X]| = 9$.

Since $G[X] \subseteq \Pic(\Z(\C)) \cap \Z(\C)_{\ad} \subseteq \Z(\C)_{\ad}' \cap \Z(\C)_{\ad}$, then 
the fusion subcategory $\langle G[X] \rangle$ generated by $G[X]$ is symmetric and therefore Tannakian, because it is odd dimensional \cite[Corollary 2.50 (i)]{DGNOI}.

We may therefore assume that $\Irr_3(\Z(\C)) = \emptyset$. Then
the Frobenius-Perron dimensions of simple objects of $\Z(\C)$ are
among the numbers $1, 5, 6, 9, 10, 15, 18, 30$ and $45$. Moreover,
in view of the possible decompositions in \eqref{decomposition-b},
$\Irr_5(\Z(\C))$ and $\Irr_9(\Z(\C))$ are both non-empty.

\medbreak Let $X \in \Z(\C)$ be a simple object of
Frobenius-Perron dimension $5$ and let $\mathcal B$ denote the
fusion subcategory generated by $X$. We claim that $\mathcal B'$
is a nontrivial proper fusion subcategory  of $\Z(\C)$. Indeed,
there must exist a simple object $Y$ of Frobenius-Perron dimension
not divisible by $5$ such that $X$ and $Y$ projectively centralize
each other. Otherwise $S_{X, Y} = 0$ for all such simple objects
\cite[Lemma 7.1]{ENO2}. As in the proof of \cite[Corollary
7.2]{ENO2}, the orthogonality of columns of the $S$-matrix imply a
relation
\begin{equation*}0 = \sum_{Y \in \Irr(\Z(\C))}
\frac{S_{X, Y}}{5} \, \FPdim Y = 1 + \sum_{\substack{\1 \neq Y\\
(\FPdim Y, 5)\neq 1}} \frac{S_{X, Y}}{5} \, \FPdim
Y,\end{equation*} which is impossible. Thus there exists a simple
object $Y$ with $\FPdim Y = 6, 9$ or $18$ and such that $X$ and
$Y$ projectively centralize each other. Then $Y\otimes Y^* \in
\mathcal B'$ and the claim follows (note that $\mathcal B' \neq
\Z(\C)$ since otherwise $\vect = \Z(\C)' = \mathcal B'' = \mathcal
B$).

Let $\D = \mathcal B'$. If $\D$ is degenerate, then its M\" uger
center $\Z_2(\D)$ is a symmetric fusion subcategory of $\Z(\C)$
and therefore $\Z_2(\D) = \mathcal E \subseteq \D$. But this
implies that $\mathcal B = \D' \subseteq \mathcal E'= \Z$, thus
$X$ is a simple object of odd prime power dimension of $\Z$ and we
are done.

We may therefore assume that $\D$ is non-degenerate. Hence $\Z(\C)
= \D \boxtimes \D' = \D \boxtimes \mathcal B$, and $\mathcal B$ is
non-degenerate. Since $\mathcal B$ has a simple object of prime
dimension $5$, it follows from \cite[Corollary 7.2]{ENO2} that
$\mathcal B$ contains a symmetric fusion subcategory. Hence
$\mathcal E \subseteq \mathcal B$. In particular, $\mathcal E
\nsubseteq \D$ and we obtain that $\D$ has no simple objects of
prime power dimension. This implies that all simple objects of
Frobenius-Perron dimension $5$ and $9$ belong to $\mathcal B$.

Let $Z$ be a non-invertible simple constituent of $Y \otimes Y^*$,
where $Y$ is a simple object with $\FPdim Y = 6, 9$ or $18$ that
projectively centralizes $X$. Then $Z \in \D$ and the
Frobenius-Perron dimension of $Z$ is either $6, 10, 15, 18, 30$ or
$45$. On the other hand, if $X_5, X_9 \in \mathcal B$ are simple
objects of $\mathcal B$ of Frobenius-Perron dimensions $5$ and
$9$, respectively, then $Z \boxtimes X_5$ and $Z \boxtimes X_9$
are simple objects of $\Z(\C) = \D \boxtimes \mathcal B$ of
Frobenius-Perron dimension $5\FPdim Z$ and $9\FPdim Z$. This
contradicts \cite[Theorem 2.11 (i)]{ENO2} because these numbers
cannot both divide $90$. This finishes the proof of the corollary.
\end{proof}

\section{Fusion categories of small dimension}\label{sec3}
The computations in this section were partly handled by a computer. Some of them were done using \cite{GAP}.
For example, it is easy to write a computer program by which one
finds out all possible positive integers $1=d_1,d_2,\cdots,d_s$
and $n_1,n_2,\cdots,n_s$ such that $84=\sum_{i=1}^{s}n_id_i^2$,
and then one can exclude those which are not possible types by
using Proposition \ref{prop1}.

\begin{thm}\label{thm2}
Let $\C$ be an integral fusion category of Frobenius-Perron
dimension $84$. Then $\C$ is of Frobenius type. Further, the group
$\Pic(\C)$ is of order $2$, $3$, $4$, $6$, $12$, $21$ or $28$ and
$\C$ is of one of the following types:
\begin{align*} & (1, 2; 2, 3; 3, 2; 4, 1; 6, 1), (1, 2; 2, 3; 3, 6; 4,
1),  (1, 2; 3, 2; 4, 4),  \\
&   (1, 3; 2, 18; 3, 1), (1, 3; 2, 9; 3, 5), (1, 3; 2, 9; 3, 1; 6,
1),& \\
& (1, 3; 2, 6; 3, 1; 4, 3),   (1, 3; 3, 9),  (1, 3; 3, 5; 6,
1), (1, 3; 3, 1; 6, 2),   & \\
& (1, 4; 2, 20), (1, 4; 2, 16; 4, 1),  (1, 4; 2, 12 ; 4, 2), (1,
4; 2, 8; 4, 3), (1, 4; 2, 4; 4, 4),  & \\ &   (1, 4; 2, 2; 3, 8),
(1, 4; 2, 2; 6, 2), (1, 4; 2, 2; 3, 4; 6, 1), (1, 4; 4, 5), &
  \\
& (1, 6; 2, 3; 3, 2; 4, 3), (1, 6; 2, 6; 3, 2; 6, 1), (1, 6;
2, 6; 3, 6), (1, 6; 2, 15; 3, 2), & \\
& (1, 12; 2, 18), (1, 12; 3, 8), (1, 12; 6, 2), (1, 12; 3,
4; 6, 1), (1, 12; 2, 6; 4, 3), & \\
& (1, 21; 3, 7), (1, 28; 2, 14).& \end{align*}
\end{thm}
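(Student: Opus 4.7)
The proof is essentially a finite combinatorial search with number-theoretic filtering, best executed by computer. The first step is to enumerate every candidate type $(1, n_0; d_1, n_1;\ldots; d_s, n_s)$ with $1 < d_1 < \cdots < d_s$ and positive integers $n_i \geq 1$ satisfying
\[84 = n_0 + \sum_{i=1}^s n_i\,d_i^2,\]
subject to the constraints $n_0 \mid 84$ (forced by $\FPdim \C_{\pt} = n_0 \mid \FPdim \C$) and $d_i \leq 9$ (since $d_i^2 \leq 83$). This produces a finite, concretely manageable list of candidate types that one can loop through.

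The second step is to eliminate impossible types by applying the eight restrictions assembled in Proposition \ref{prop1}. Since $60 \nmid 84$ and $8 \nmid 84$, parts (i) and (ii) immediately kill all types of the form $(1,1; 2,m;\ldots)$ and every type with $4 \mid n_0$ and $|\Irr_2(\C)|$ odd. Part (iii) discards any type where $n_0 \nmid n_i d_i^2$ for some $i$; parts (iv) and (v) handle the very small $s$ cases, in particular $(1,n_0;t,n)$ with $t \nmid n_0$ and $(1,1;d_1,n_1;\ldots,d_s,n_s)$ with $s\le 2$. The most restrictive filters, however, are parts (vi)--(viii): for any surviving type with $\Irr_2(\C) \neq \emptyset$ one of $12$, $24$, $60$, or $n_0+4m$ (with $m = |\Irr_2(\C)|$) must divide $84$, a tight congruence. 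Running this filtering loop through the full candidate list should leave exactly the $31$ types in the statement.

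The final step is to read off the two conclusions from the surviving list. Direct inspection shows that every dimension $d_i$ appearing is in $\{1,2,3,4,6\}$, hence divides $84$, so every admissible type is of Frobenius type; and the values of $n_0 = |\Pic(\C)|$ that actually occur are exactly $\{2, 3, 4, 6, 12, 21, 28\}$, matching the claim. The main obstacle I anticipate is not any single clever argument but verifying that Proposition \ref{prop1} is strong enough to eliminate every numerically admissible type with some $d_i \nmid 84$; in particular, candidates involving $d_i \in \{5,7,8,9\}$ must be ruled out by some combination of parts (iii), (iv), (vi)--(viii), and this needs to be double-checked case by case against the enumerator's output. Once the list stabilizes, the theorem follows by inspection.
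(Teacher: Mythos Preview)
Your outline captures the overall strategy correctly, but there is a genuine gap at the point you yourself flag: Proposition~\ref{prop1} is \emph{not} strong enough by itself to cut the candidate list down to the types in the statement. If you actually run the filter, five types survive in addition to the listed ones:
\[
(1,1;3,2;4,1;7,1),\quad (1,1;3,1;5,1;7,1),\quad (1,2;3,2;8,1),\quad (1,4;4,1;8,1),\quad (1,2;4,2;5,2).
\]
None of these is touched by any clause of Proposition~\ref{prop1}: they have $n_0\in\{1,2,4\}$, no simple objects of dimension~$2$ (so (vi)--(viii) are vacuous), $s\ge 2$ with $n_0\neq 1$ or $s=3$ (so (v) misses), and the divisibility in (iii) is satisfied in every case. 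The first of the five even has all $d_i\mid 84$, so it is formally of Frobenius type yet still must be excluded from the final list.

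The paper eliminates each of these five by short ad~hoc computations in the Grothendieck ring: one writes down $x\,x^*$ for a well-chosen simple $x$, uses Frobenius reciprocity $m(a,bc)=m(b^*,ca^*)$ repeatedly to force the remaining multiplicities, and reaches a numerical contradiction (e.g.\ for $(1,1;3,1;5,1;7,1)$ one chases $x_3x_3^*=\1+x_3+x_5$ through to an impossible decomposition of $x_3x_5$). Your proposal contains no mechanism of this kind, and without it the proof does not close. Supplying these five fusion-rule arguments is the missing ingredient.
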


\begin{proof}
Assume first that $\C$ is of Frobenius type.  In view of the results in Section
\ref{sec2}, a computer program gives the prescribed ones as the only possible
types in dimension $84$. In fact, using Proposition \ref{prop1}, the list of all possible types can be reduced to
the type $(1,1;3,2;4,1;7,1)$ and the types appearing in the claim.
But the type $(1,1;3,2;4,1;7,1)$ can be discarded as follows: Let $x_3\in
\Irr_3(\C)$. Then $x_3x_3^*=\1+2x_4$, where $x_4\in \Irr_4(\C)$. From
$m(x_4,x_3x_3^*)=m(x_3,x_4x_3)=2$, we have $x_4x_3=2x_3+2x_3'$, where $x_3\neq
x_3'\in \Irr_3(\C)$. Then $m(x_3',x_4x_3)=m(x_4,x_3'x_3^*)=2$, which means that
$x_3'x_3^*=2x_4+\1$. So $x_3=x_3'$, a contradiction.

\medbreak Now assume that $\C$ is not of Frobenius type.  It follows from Proposition
\ref{prop1} that $\C$ has one of the following types: $(1,1;3,1;5,1;7,1)$,
$(1,2;3,2;8,1)$, $(1,4;4,1;8,1)$, $(1,2;4,2;5,2)$.  We shall exclude these
types below.

\medbreak \noindent \textbf{Type  $(1,1;3,1;5,1;7,1)$.} Let
$x_3,x_5,x_7 \in \Irr (\C)$ of Frobenius-Perron dimensions $3,5$
and $7$, respectively. From $x_3x_3^*=\1+x_3+x_5$, we have
$m(x_5,x_3x_3^*)=m(x_3,x_5x_3)=1$, which means that
$x_5x_3=x_3+x_5+x_7$. Then
$m(x_5,x_5x_3)=m(x_5,x_3x_5)=m(x_3,x_5^2)=1$, which means that
$x_5^2=\1+x_3+3x_7$. Then $m(x_7,x_5^2)=m(x_5,x_7x_5)=3$, which
means that $x_7x_5=3x_5+2x_3+2x_7$. Then
$m(x_3,x_7x_5)=m(x_7,x_3x_5)=2$, which means that
$x_3x_5=2x_7+\1$. This contradicts Schur's Lemma.

\medbreak \noindent \textbf{Type  $(1,2;3,2;8,1)$.} Let $x_3 \in
\Irr_3(\C)$. Then
$x_3x_3^*=\1+x_8$, where $x_8$ is the unique simple object  of
Frobenius-Perron dimension $8$. From
$m(x_8,x_3x_3^*)=m(x_3,x_8x_3)=1$, we have
$x_8x_3=x_3+mx_3'+nx_8$, where $3m+8n=21$ and $x_3\neq x_3'\in
\Irr_3(\C)$. From $m(x_3',x_8x_3)=m(x_8,x_3'x_3^*)=m$, we know
that $m\leq1$. Hence, the equation $3m+8n=21$ can not hold true.

\medbreak \noindent \textbf{Type  $(1,4;4,1;8,1)$.} Let $x_4,x_8
\in \Irr(\C)$ of Frobenius-Perron dimensions $4$ and $8$,
respectively, and $\Pic(\C)=\{\1,g_1,g_2,g_3\}$. Then
$$x_4^2 = x_4x_4^*=\1+g_1+g_2+g_3+3x_4 \mbox{\,\,or\,\,}
x_4x_4^*=\1+g_1+g_2+g_3+x_4+x_8.$$ If the first possibility holds,
then the set $\Pic(\C)\cup \Irr_4(\C)$ spans a fusion subring of $K_0(\C)$.
Hence, $\C$ has a fusion subcategory of Frobenius-Perron dimension
$20$. This is impossible since $20$ does not divide $\FPdim \C$.
If the second possibility holds, then
$m(x_8,x_4x_4^*)=m(x_4,x_8x_4)=1$, which means that
$x_8x_4=x_4+mx_8$, where $m$ is a non-negative integer. This is
impossible.

\medbreak \noindent \textbf{Type  $(1,2;4,2;5,2)$.} Let $x_4 \in
\Irr_4(\C)$ and let
$\Pic(\C)=\{1,g\}$. Then there must exist $z \in \Irr_5(\C)$ such
that $m(z,x_4x_4^*)=1,2$ or $3$.

If $m(z,x_4x_4^*)=3$ then $m(x_4,z x_4)=3$. This means that
$z x_4=3x_4+2y$, where $x_4\neq y \in \Irr_4(\C)$. Then
$m(y,z x_4)=m(z,yx_4^*)=2$. This means that $yx_4^*=2z+u$, where
$u$ is an object of dimension $6$ such that $m(z, u) = 0$. Notice that, in our case, $x_4x_4^*=\1+3z$ and
hence $y=gx_4$. Hence, $yx_4^*=gx_4x_4^*=g(\1+3z)$, which
contradicts with $m(z,yx_4^*)=2$.

If $m(z,x_4x_4^*)=2$ then $m(x_4,z x_4)=2$. This means that $z
x_4=2x_4+3y$, where $x_4\neq y\in \Irr_4(\C)$. Then $m(y,z
x_4)=m(z,yx_4^*)=3$. This implies that $yx_4^*=3z+g$, and hence
$y=gx_4$. Hence, $yx_4^*=gx_4x_4^*=3z+g$, which means that
$x_4x_4^*=3gz+\1$. This contradicts the assumption that
$m(z,x_4x_4^*)=2$.

If $m(z,x_4x_4^*)=1$ then $m(x_4,zx_4)=1$. This means that $z
x_4=x_4+4y$, where $x_4\neq y\in \Irr_4(\C)$. Then $m(y,z
x_4)=m(z,yx_4^*)=4$. This is a contradiction. This completes the
proof of the theorem.
\end{proof}

\begin{thm}\label{thm3}
Let $\C$ be an integral fusion category of Frobenius-Perron
dimension $90$. Then $\C$ is of Frobenius type. Moreover, the
group $\Pic(\C)$ is of order $2$, $6$, $9$, $10$, $15$,
$18$, $30$ or $45$ and $\C$ is of one of the following types:
\begin{align*}& (1, 2; 2, 4; 3, 8), (1, 2; 2, 4; 6, 2), (1, 2; 2, 4; 3, 4;  6, 1), (1, 2; 2,
22),&\\
& (1, 6; 2, 3; 3, 8), (1, 6; 2, 3; 6, 2), (1, 6; 2, 3; 3, 4; 6, 1), (1, 6; 2, 21), &\\
& (1, 9; 3, 9), (1, 9; 3, 1; 6, 2), (1, 9; 3, 5; 6, 1), (1, 9; 9, 1), (1, 10; 2, 20), (1, 15; 5, 3),& \\
&
 (1, 18; 2, 18), (1, 18; 3, 8), (1, 18; 3, 4; 6, 1), (1, 18; 6, 2),  (1, 30; 2, 15), (1, 45; 3, 5).&
\end{align*}
\end{thm}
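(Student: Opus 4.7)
The plan is to mirror the proof of Theorem \ref{thm2}. First, assume $\C$ is of Frobenius type. Since each $d_i$ then divides $90$, the possible simple dimensions lie in $\{2,3,5,6,9\}$, and a computer search over the Diophantine equation $90 = n_0 + \sum_i n_i d_i^2$, filtered by Proposition \ref{prop1} and the auxiliary lemmas of Section \ref{sec2}, should return exactly the twenty types listed in the statement. A useful preliminary observation is that, since none of $12$, $24$, $60$ divides $90$, Theorem \ref{thm1} forces $G[x] \neq \1$ for every $x \in \Irr_2(\C)$; combined with $|G[x]| \mid (\FPdim x)^2 = 4$ and $|G[x]| \mid |\Pic(\C)| \mid 90$, this gives $|G[x]| = 2$, so $|\Pic(\C)|$ must be even whenever $\Irr_2(\C) \neq \emptyset$. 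This observation, together with Proposition \ref{prop1}(v), should rule out $n_0 \in \{1, 3, 5\}$ by direct inspection of the equation, matching the eight admissible values of $|\Pic(\C)|$ stated in the theorem.

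Next, I would assume $\C$ is not of Frobenius type. Then some simple has dimension in $\{4, 7, 8\}$, and the same computer search, relaxed to allow these dimensions, yields a short list of non-Frobenius candidate types that must be eliminated.

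For each such candidate, I would apply one of two strategies. If the dimension data forces $\C$ to contain a fusion subcategory of Frobenius-Perron dimension $6$ (typically produced via Lemma \ref{prod-leq2} from a common nontrivial stabiliser $G[x]$ for the dimension-$2$ simples, or spanned by $\Pic(\C)$ together with low-dimensional simples), then by Corollary \ref{90-6} the category $\C$ is weakly group-theoretical, hence of Frobenius type by \cite[Theorem 1.5]{ENO2}, contradicting the presence of a simple whose dimension does not divide $90$. When no dimension-$6$ fusion subcategory is forced, I would imitate the four case-by-case arguments in the proof of Theorem \ref{thm2}: choose a simple $x$ of the exceptional dimension, expand $xx^*$ using the bound $|G[x]| \leq (\FPdim x)^2$, and apply Frobenius reciprocity $m(y, xx^*) = m(x, yx)$, Lemma \ref{prod-simples-categorico}, and Schur's lemma to obtain an arithmetic contradiction.

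The main obstacle is the last step: non-Frobenius candidates with small $|\Pic(\C)|$ and several simples of dimensions $2$, $3$, and $4$ or $8$ give rise to considerable freedom in the decompositions of $xx^*$ and of $y \cdot x$, so the multiplicity bookkeeping must be organised carefully, sometimes using the $\Pic(\C)$-action on $\Irr_d(\C)$ to reduce to a canonical choice of $x$. A secondary subtlety is to identify, for those candidates to which Corollary \ref{90-6} applies, the dimension-$6$ fusion subcategory explicitly, which amounts to verifying via Lemma \ref{prod-leq2} that a distinguished subset of $\Irr(\C)$ is closed under tensor products.
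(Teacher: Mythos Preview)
Your plan is essentially the paper's own proof. The paper also splits into the Frobenius and non-Frobenius cases, uses a computer search filtered by Proposition~\ref{prop1}, invokes Corollary~\ref{90-6} on exactly those non-Frobenius candidates that carry a fusion subcategory of dimension $6$ (namely those with $n_0=6$, and those with $n_0=2$ and $|\Irr_2(\C)|=1$, where Lemma~\ref{prod-leq2} produces a subcategory of type $(1,2;2,1)$), and disposes of the remaining six non-Frobenius candidates by the multiplicity bookkeeping you describe. Your parity observation that $\Irr_2(\C)\neq\emptyset$ forces $|\Pic(\C)|$ even is the content of Lemma~\ref{lem4}(i) specialised to $\FPdim\C=90$; the paper uses that lemma to kill the four spurious Frobenius-type candidates $(1,5;2,10;3,5)$, $(1,9;2,9;3,5)$, $(1,3;2,3;5,3)$, $(1,9;2,9;3,1;6,1)$ and the non-Frobenius candidate $(1,3;2,3;3,3;4,3)$, so be sure your filtering also covers the $n_0=9$ cases with $\Irr_2(\C)\neq\emptyset$, not just $n_0\in\{1,3,5\}$.
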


\begin{proof}
Assume first that $\C$ is of Frobenius type. Besides of the types
listed in the claim, a computer calculation  combined with
Proposition \ref{prop1} yields
    the additional types $(1,5;2,10;3,5)$, $(1,9;2,9;3,5)$, $(1,3;2,3;5,3)$,
    $(1,9;2,9;3,1;6,1)$.  These four types can be excluded by Lemma \ref{lem4}.

\medbreak Now assume that $\C$ is not of Frobenius type. By
Proposition \ref{prop1},
    $\C$ has one of the following types:\\
$(1,1;3,1;4,1;8,1)$,
$(1,2;2,2;4,1;8,1)$,
$(1,2;4,1;6,2)$,
$(1,2;3,8;4,1)$, $(1,6;3,4;4,3)$, $(1,2;3,4;4,1;6,1)$,\,\,
$(1,2;2,2;4,5)$,\,\,
$(1,2;2,1;4,3;6,1)$,\,\, $(1,2;2,1;3,2;4,1;5,2)$,\\
$(1,2;2,1;3,4;4,3)$,  $(1,3;2,3;3,3;4,3)$, $(1,6;4,3;6,1)$, $(1,6;2,9;4,3)$.

\medbreak Note that the type $(1,3;2,3;3,3;4,3)$ is discarded by
Lemma \ref{lem4} (i). In addition, since every weakly
group-theoretical fusion category is of Frobenius type, then the
types $(1,6;3,4;4,3)$,   $(1,6;  4,3;6,1)$
and $(1,6;2,9;4,3)$ are discarded by Corollary \ref{90-6}. Similarly, the types $(1,2;2,1;4,3;6,1)$,
$(1,2;2,1;3,2;4,1;5,2)$ and  $(1,2;2,1;3,4;4,3)$ are discarded by Lemma \ref{lem4} (iv) and Corollary \ref{90-6}. We
shall exclude the remaining types below.

%The result then follows by a
%computer check, using the results in Section \ref{sec2}. \underline{For instance,} we use Lemma \ref{lem4} to exclude the types  $(1,5;2,10;3,5)$, $(1,9;2,9;3,5)$ and $(1,9;2,9;3,1;6,1)$.

\medbreak \noindent \textbf{Type $(1,1;3,1;4,1;8,1)$.} Let
$x_3,x_4, x_8 \in \Irr(\C)$ of Frobenius-Perron dimensions $3$,
$4$ and $8$, respectively. Then $x_3x_3^*=\1+2x_4$ or $x_3x_3*=\1+x_8$. In the first case,
$m(x_4,x_3x_3^*)=m(x_3,x_4x_3)=2$. This means that
$x_4x_3=2x_3+z$, where $\FPdim z=6$ and $z$ is a sum of simple
objects of Frobenius-Perron dimension $4$ or $8$, which is
impossible.
In the second case, $m(x_8,x_3x_3^*)=m(x_3,x_8x_3)=1$.
This means that $x_8x_3=x_3+u$, where $\FPdim u=21$ and $u$ is a
sum of simple objects of Frobenius-Perron dimension $4$ or $8$. It
is also impossible.

\medbreak \noindent \textbf{Type  $(1,2;2,2;4,1;8,1)$.} Let
$x_4 \in \Irr_4(\C)$. Then there
exists $x_2 \in \Irr_2(\C)$ such that
$m(x_2,x_4x_4^*)=1$ or $2$. If $m(x_2,x_4x_4^*)=1$ then
$x_2x_4=x_4+x_2'+x_2''$, where $x_2',x_2''\in \Irr_2(\C)$. Then
$m(x_2',x_2x_4)=m(x_2'^*,x_4x_2^*)=m(x_4,x_2'^*x_2)=1$. This means
that $x_2'\neq x_2''$ and hence $m(x_2,x_2x_4)=1$. Then
$m(x_4,x_2^*x_2)=1$, which is impossible. Therefore
$m(x_2,x_4x_4^*)=2$, for every $x_2\in \Irr_2(\C)$. Then
$x_4x_4^*=\1+g+2x_2+y$, where $\FPdim y=10$ and
$m(x_2,y)=0$. Taking Frobenius-Perron dimensions, we see that this is impossible.

\medbreak \noindent \textbf{Type  $(1,2;4,1;6,2)$.} Let $x_4$ be
the unique element of $\Irr_4(\C)$. Then $x_4x_4^*=\1+g+2x_4+x_6$, where $\{\1,g\}=\Pic(\C)$ and
$x_6\in \Irr_6(\C)$. It follows that
$m(x_6,x_4x_4^*)=m(x_4,x_6x_4)=1$ and $x_6x_4=x_4+z$, where
$m(x_4,z)=0$ and $\FPdim z=20$. It is impossible.

\medbreak \noindent \textbf{Type  $(1,2;3,8;4,1)$.} Let $x_4$ be
the unique element of $\Irr_4(\C)$. Then there must exist an element $x_3$ of $\Irr_3(\C)$ such that
$m(x_3,x_4x_4^*)=m(x_4,x_3x_4)=1$, $2$ or $3$. If
$m(x_4,x_3x_4)=1$ or $2$, then the decomposition of $x_3x_4$ gives
rise to a contradiction. Therefore, $m(x,x_4x_4^*)=3$ or $0$, for
every $x\in \Irr_3(\C)$. Then the decomposition of $x_4x_4^*$ gives
rise to a contradiction.

%\medbreak \noindent \textbf{Type  $(1,6;3,4;4,3)$.} Let $x_3\in
%\Irr_3(\C)$. Then $G[x_3]$ is of order $1$ or $3$, by Lemma
%\ref{lem1}. Considering the action of $\Pic(\C)$ on $\Irr_3(\C)$ by
%the left multiplication, we know that if $|G[x_3]|=1$ then there
%exists an orbit with length $6$  which is longer than $|\Irr_3(\C)|$.
%This is impossible. Therefore, $x_3x_3^*=\1+g_1+g_2+z$, for every
%$x_3\in \Irr_3(\C)$, where $\{\1,g_1,g_2\}\subseteq \Pic(\C)$ is a
%subgroup and $z$ is a sum of elements of $\Irr(\C)$ of
%Frobenius-Perron dimension $3$.
%
%
%In addition, for every $x_3\neq x_3'\in \Irr_3(\C)$, $x_3'x_3^*$ does
%not contain simples of Frobenius-Perron dimension $4$ as its
%irreducible components. In fact, if there exists $x_4 \in \Irr_4(\C)$
% such that $m(x_4,x_3'x_3^*)>0$ then there must exist
%$\1\neq g\in \Pic(\C)$ such that $m(g,x_3'x_3^*)=m(x_3',gx_3)=1$,
%which means that $x_3'=gx_3$. Then
%$x_3'x_3^*=gx_3x_3^*=g(\1+g_1+g_2+z)$, which contradicts the
%assumption that $m(x_4,x_3'x_3^*)>0$.
%
%
%It follows that $\Pic(\C)\cup \Irr_3(\C)$ spans a fusion subring of
%$K_0(\C)$, which corresponds to a fusion subcategory  of
%Frobenius-Perron dimension $42$. This is impossible.

\medbreak \noindent \textbf{Type  $(1,2;3,4;4,1;6,1)$.} Let $x_4
\in \Irr_4(\C)$. If $m(x_3,x_4x_4^*)>0$ for some $x_3\in\Irr_3(\C)$,
then $m(x_4,x_3x_4)=1$, $2$ or $3$. If $m(x_4,x_3x_4)=1$ or $2$,
then the decomposition of $x_3x_4$ gives rise to a contradiction.
If $m(x_4,x_3x_4)=3$ then $x_4x_4^*=\1+g+3x_3+z$, where
$\{\1,g\}=\Pic(\C)$ and $\FPdim z=5$. It is impossible. Therefore,
$x_4x_4^*=\1+g+2x_4+x_6$, where $x_6\in \Irr_6(\C)$. It follows that
$m(x_6,x_4x_4^*)=m(x_4,x_6x_4)=1$ and $x_6x_4=x_4+z$, where
$\FPdim z=20$ and $m(x_4,z)=0$. It is also impossible since $z$ is
the sum of elements of $\Irr_3(\C)$ and $\Irr_6(\C)$.

\medbreak \noindent \textbf{Type  $(1,2;2,2;4,5)$.} Write
$\Pic(\C)=\{\1,g\}$. Because $|\Irr_4(\C)|=5$ is odd, there
exists a self-dual $x_4\in\Irr_4(\C)$. Moreover,
$G[x_4]=\Pic(\C)$. Counting degrees, we find that there exists
only one element of $\Irr_2(\C)$ appearing in the decomposition of
$x_4x_4^*$ with multiplicity $1$, say $x_2$. Clearly, $x_2$ is
self-dual. Let $x_2\neq x_2'\in \Irr_2(\C)$.
Since $G[x_2]=G[x_2']=\Pic(\C)$, Lemma \ref{prod-simples-categorico} shows
that $x_2x_2'$ and $x_2'x_2$ are not irreducible. The fusion rules
of elements of $\Irr(\C)$ show that
$x_2x_2'=x_2'x_2=x_2+x_2'$.

From $m(x_2,x_4^2)=m(x_4,x_2x_4)=1$, we have
$x_2x_4\stackrel{(1)}{=}x_4+x_4'$, where $x_4\neq
x_4'\in \Irr_4(\C)$. Then
$m(x_4',x_2x_4)=m(x_2,x_4'x_4)\stackrel{(2)}{=}1$. Counting
degrees, $x_4'x_4=x_2+x_2'+\omega$ is the only possible
decomposition of $x_4'x_4$, where $\FPdim \omega=12$ and
$m(x_2',\omega)=0$. Then $m(x_2',x_4'x_4)=m(x_4',x_2'x_4)=1$,
which means that $x_2'x_4\stackrel{(3)}{=}x_4'+x_4''$, where
$x_4'\neq x_4''\in\Irr_4(\C)$. From
$m(x_2',x_4'x_4)=m(x_4,x_2'x_4')=1$, we have
$x_2'x_4'=x_4+x_4'''$, where $x_4\neq x_4'''\in\Irr_4(\C)$.
Multiplying equality (1) on the left by $x_2$, we have
$x_2^2x_4=x_2x_4+x_2x_4'$.

If $x_2^2=\1+g+x_2$, then $(\1+g+x_2)x_4=2x_4+x_2x_4=x_2x_4+x_2x_4'$. This shows that
$x_2x_4'=2x_4$. It follows that
$m(x_4,x_2x_4')=m(x_2,x_4x_4'^*)=m(x_2,x_4'x_4)=2$. This
contradicts equality (2).

If $x_2^2=\1+g+x_2'$, then
$(\1+g+x_2')x_4=2x_4+x_2'x_4=2x_4+x_4'+x_4''
=x_2x_4+x_2x_4'=x_4+x_4'+x_2x_4'$. This shows that
$x_2x_4'=x_4+x_4''$. Multiplying on the left by $x_2'$, we have
\begin{eqnarray*}
x_2'x_2x_4'&=&(x_2+x_2')x_4'=x_2x_4'+x_2'x_4'
= x_4+x_4''+x_4+x_4'''\\&=& x_2'x_4+x_2'x_4''
= x_4'+x_4''+x_2'x_4''.
\end{eqnarray*}

%\begin{eqnarray*}
%x_2'x_2x_4'&=&(x_2+x_2')x_4'=x_2x_4'+x_2'x_4'\\
%&=&x_4+x_4''+x_4+x_4'''=x_2'x_4+x_2'x_4''\\
%&=&x_4'+x_4''+x_2'x_4''.
%\end{eqnarray*}

This shows that $x_4'+x_2'x_4''=2x_4+x_4'''$. Hence, $x_4'=x_4'''$
and $x_2'x_4''=2x_4$. Then
$m(x_4,x_2'x_4'')=m(x_4,(x_4'')^*x_2')=m((x_4'')^*,x_4x_2')=m(x_4'',x_2'x_4)=2$.
This contradicts equality (3).
\end{proof}

\begin{proof}[Proof of Theorem \ref{fpdim100}] Let $\C$ be a
fusion category and suppose $\FPdim \C = N$ is a natural number
and  $N< 120$. In view of the results of \cite{ENO2}, every
fusion category of Frobenius-Perron dimension $60$, $p^aq^b$ or
$pqr$, where $p$, $q$ and $r$ are prime numbers, $a, b \geq 0$, is
weakly group-theoretical. Indeed, if $\FPdim \C = 60$, this is
shown in \cite[Theorem 9.16]{ENO2}. If $\FPdim \C = p^aq^b$, then
$\C$ is solvable and thus weakly group-theoretical \cite[Theorem
1.6]{ENO2}. If $\FPdim \C = pqr$, then either $\C$ is integral and
thus group-theoretical \cite[Theorem 9.2]{ENO2}, or $\C$ is a
$\mathbb Z_2$-extension of a fusion subcategory $\D$ and we may
assume that $p = 2$ and $\FPdim \D = qr$. Then $\FPdim \D$ is odd
and $\D$ is group-theoretical by \cite{EGO}. Hence $\C$ is weakly
group-theoretical also in this case. In particular, all such
fusion categories have the strong Frobenius property \cite[Theorem
1.5]{ENO2}.

We may therefore assume that $N = 84 = 2^2.3.7$ or $90 = 2.3^2.5$.
If $\C$ is not integral, then $\C$ is a $\mathbb Z_2$-extension of
a fusion subcategory $\D$ \cite[Theorem 3.10]{gel-nik} with
$\FPdim \D = 42$ or $45$, respectively. By the previous discussion, $\D$ is
weakly group-theoretical, and therefore so is $\C$. Hence in this
case $\C$ has the strong Frobenius property and in particular, it
is of Frobenius type. Finally, if  $\C$ is integral, then $\C$ is
of Frobenius type, by Theorems \ref{thm2} and \ref{thm3}.

Suppose next that $\FPdim \C > 1$ and $\C \ncong \rep
\mathbb A_5$.  It follows from \cite[Theorem 1.6]{ENO2}, that if
$\FPdim \C = p^aq^b$, where $p$ and $q$ are prime numbers, then
$\C$ has nontrivial invertible objects (see Lemma \ref{pa-qb}).

If $\FPdim \C = pqr$, where $p$, $q$ and $r$ are distinct prime
numbers, then either $\C$ is a $\mathbb Z_2$-extension of a fusion
subcategory $\D$ or $\C$ is group-theoretical. In the first case,
we know that $\D$ has nontrivial invertible objects, whence so
does $\C$. Thus we may assume that $\C$ is group-theoretical, that
is, $\C$ is equivalent as a fusion category to the category $\C(G,
\omega, H, \psi)$ of $k_{\psi}H$-bimodules in $\C(G, \omega)$,
where $G$ is a group of order $pqr$, $H \subseteq G$ is a
subgroup, $\omega \in Z^3(G, k^*)$ and $\psi \in C^2(H, k^*)$ are
such that $\omega\vert_H = d\psi$. We may assume that $H \neq 1$
(otherwise $\C$ is pointed and we are done). Then the group
$\widehat H$ of linear characters on $H$ is also nontrivial, and
it follows from \cite[Theorem 5.2]{gel-naidu} that $\Pic(\C) \neq
\1$ also in this case.

Suppose that $\FPdim \C = 60$. Since, by assumption, $\C \ncong
\rep \mathbb A_5$,   \cite[Theorem 9.12]{ENO2} implies that $\C$
has a proper fusion subcategory $\D$. The previous discussion
shows that $\Pic (\D) \neq \1$ and therefore also $\Pic(\C) \neq
\1$.

It remains to  consider the cases where $\FPdim \C = 84$ or $90$.
As before, we may assume that $\C$ is integral. The result follows
in this case from Theorems \ref{thm2} and \ref{thm3},
respectively.
\end{proof}

%\textbf{Acknowledgements.} The first author was partially
%supported by the China postdoctoral science foundation
%(2012M511643) and the Jiangsu planned projects for postdoctoral
%research funds (1102041C).

\end{document}